\documentclass[a4paper]{amsart}%
\usepackage{amsmath}
\usepackage{amsfonts}%
\setcounter{MaxMatrixCols}{30}%
\usepackage{amssymb}%
\usepackage{graphicx}
%TCIDATA{OutputFilter=latex2.dll}
%TCIDATA{Version=5.50.0.2953}
%TCIDATA{CSTFile=amsartci.cst}
%TCIDATA{Created=Thursday, March 12, 2020 17:50:47}
%TCIDATA{LastRevised=Monday, April 06, 2020 18:32:03}
%TCIDATA{<META NAME="GraphicsSave" CONTENT="32">}
%TCIDATA{<META NAME="SaveForMode" CONTENT="1">}
%TCIDATA{BibliographyScheme=Manual}
%TCIDATA{<META NAME="DocumentShell" CONTENT="Articles\SW\AMS Journal Article">}
%TCIDATA{Language=American English}
%BeginMSIPreambleData
\providecommand{\U}[1]{\protect\rule{.1in}{.1in}}
%EndMSIPreambleData
\headsep=1.0cm \headheight=0cm \footskip=1.0cm \textwidth=17cm
\textheight=22cm \topmargin=0.cm \oddsidemargin=-0.5cm
\evensidemargin=-0.5cm
\parindent=0.0cm
\newtheorem{theorem}{Theorem}
\theoremstyle{plain}
\newtheorem{acknowledgement}{Acknowledgement}

\newtheorem{corollary}{Corollary}

\newtheorem{lemma}{Lemma}

\newtheorem{remark}{Remark}

\numberwithin{equation}{section}
\begin{document}
\title[Local and nonlocal optimal control in the source]{Local and nonlocal optimal control in the source}
\author{Julio Mu\~{n}oz}
\address{Departamento de Matem\'{a}ticas. Universidad de Castilla-La Mancha. Toledo, Spain.}
\email{Julio.Munoz@uclm.es}
\date{Mars 12, 2020}
\subjclass{Primary 05C38, 15A15; Secondary 05A15, 15A18}
\keywords{Optimal Control, Nonlocal Optimal Control, Approximation in Optimal Control.}

\begin{abstract}
The analysis of an optimal control problem of nonlocal type is analyzed. The
results obtained are applied to the study the corresponding local optimal
control problems. The state equations are governed by p-laplacian elliptic
operators, of local and nonlocal type, and the costs belong to a wide class of
integral functionals. The nonlocal problem is formulated by means of a
convolution of the states with a kernel. This kernel depends on a parameter,
called horizon which, is responsible for the nonlocality of the equation. The
input function is the source of the elliptic equation. Existence of nonlocal
controls is obtained and a $G$-convergence result is employed in this task.
The limit of the solutions of the nonlocal optimal control, when the horizon
tends to zero, is analyzed and compared to the solution of the underlying
local optimal control problem.

\end{abstract}
\maketitle

\section{ Introduction\label{S1}}

The nonlocal models have showed a great level of capability in the study of
phenomena in many of the branches of science. They have been one of the main
alternatives to reformulate different types of problems in Applied
Mathematics. The usage of these models has been notable in fields like kinetic
equations, phase transitions, diffusion models and other themes of continuum
mechanics \cite{Sh, Metzler, Carreras, Metzler2, Gardiner, Bakunin, Neuman,
Bucur, Vazquez}. There are several ways to introduce the nonlocality when we
try to model some classical problems. Among others works we must highlight
\cite{Rossi2, Vazquez2, Gunzburger-Du-L-1, Du, kava} in the nonlocal framework
and \cite{caputo, Miller, rubin, Debnath, hilfer, Oustaloup, kibas, kulish}
from the point of view of the fractional analysis. In a general context, the
main idea to built a nonlocal model basically relies on considering
derivatives of nonlocal type, or of fractional derivatives, instead the
classical ones. This new way to measure the variability, somehow, allows to
introduce and modulate long-range interactions.

In our specific context, of optimal control problems governed by partial
differential equations, instead of considering differential equations, we
shall present a nonlocal model built by means integral equations. These
integrals are somehow, the convolution of the states with certain family of
kernels. This family is parametrized by a number, called \textit{horizon},
which is the responsible of the degree of the nonlocal interaction. The
proposed optimization problem is driven by the nonlocal $p$-laplacian as state
equation, and Dirichlet boundary conditions are imposed. The control is the
right-hand side forcing function, the source, and the cost to minimize belongs
to a fairly general class of integral functional.

The purpose of the present article is the analysis of this type of nonlocal
optimal control problem, the existence of solutions and their asymptotic
behavior when the nonlocality, the \textit{horizon}, tends to zero. Since in
the limit we recover the formulation of certain classical control problems,
some meaningful conclusions about approximation or existence of classical
solutions are obtained as well. Consequently, two different problems will be
addressed in the article, the nonlocal model and the classical or local
counterpart. To go into the details, we firstly specify the ambient space we
work on, and then, we shall formulate these two optimal control problems.

\subsection{Hypotheses}

Specifically, the framework in which we shall work can be described as
follows. The domain is $\Omega\subset\mathbb{R}^{N},$ a bounded open domain.
We define its extension $\Omega_{\delta}=\Omega\cup_{p\in\partial\Omega
}B\left(  p,\delta\right)  ,\ $where $B\left(  x,r\right)  $ is the notation
of an open ball centered at $x\in\mathbb{R}^{N}$ and radius $r>0$ and $\delta$
is a positive number.

About the right term of the elliptic equations, the function $f,$ called the
source, we assume $f\in L^{p^{\prime}}\left(  \Omega\right)  $ where
$p^{\prime}=\frac{p}{p-1}$ and $p>1.$ Concerning the kernels $\left(
k_{\delta}\right)  _{\delta>0},$ we assume that it is a sequence of
nonnegative radial functions such that for any $\delta,$%
\begin{equation}
\operatorname*{supp}k_{\delta}\subset B\left(  0,\delta\right) \label{1}%
\end{equation}
and%
\[
\frac{1}{C_{N}}\int_{B\left(  0,\delta\right)  }k_{\delta}\left(  \left\vert
z\right\vert \right)  dz=1
\]
where $C_{N}=\frac{1}{\operatorname*{meas}\left(  S^{N-1}\right)  }%
\int_{S^{N-1}}\left\vert \omega\cdot e\right\vert ^{p}d\sigma^{N-1}\left(
\omega\right)  ,$ where $\sigma^{N-1}$ stands for the $N-1$ dimensional
Haussdorff measure on the unit sphere $S^{N-1}$ and $e$ is any unitary vector
in $\mathbb{R}^{N}.$ In addition, the kernels satisfy the uniform estimation%
\begin{equation}
k_{\delta}\left(  \left\vert z\right\vert \right)  \geq\frac{c_{0}}{\left\vert
z\right\vert ^{N+\left(  s-1\right)  p}}\label{2}%
\end{equation}
where $c_{0}>0$ and $s\in\left(  0,1\right)  $ are given constants such that
$N>ps.$

The natural frame in which we shall work is the nonlocal energy space
\begin{equation}
X=\left\{  u\in L^{p}\left(  \Omega_{\delta}\right)  :B\left(  u,u\right)
<\infty\right\} \label{3}%
\end{equation}
where $B$ is the operator defined in $X\times X$ by means of the formula%
\begin{equation}
B\left(  u,v\right)  =\int_{\Omega_{\delta}}\int_{\Omega_{\delta}}k_{\delta
}\left(  \left\vert x^{\prime}-x\right\vert \right)  \frac{\left\vert u\left(
x^{\prime}\right)  -u\left(  x\right)  \right\vert ^{p-2}\left(  u\left(
x^{\prime}\right)  -u\left(  x\right)  \right)  }{\left\vert x^{\prime
}-x\right\vert ^{p}}\left(  v\left(  x^{\prime}\right)  -v\left(  x\right)
\right)  dx^{\prime}dx.\label{4}%
\end{equation}
We define also the constrained energy space as%
\[
X_{0}=\left\{  u\in X:u=0\text{ in }\Omega_{\delta}\setminus\Omega\right\}
\]
It is well-known that for any given $\delta>0$ the space $X=X\left(
\delta\right)  $ is a Banach space with the norm
\[
\left\Vert u\right\Vert _{X}=\left\Vert u\right\Vert _{L^{p}\left(
\Omega_{\delta}\right)  }+\left(  B\left(  u,u\right)  \right)  ^{1/p}.
\]
The dual of $X$ will be denoted by $X^{\prime}\ $and can be endowed with the
norm defined by
\[
\left\Vert g\right\Vert _{X^{\prime}}=\sup\left\{  \left\langle
g,w\right\rangle _{X^{\prime}\times X}:w\in X,\text{ }\left\Vert w\right\Vert
_{X}=1\right\}  .
\]
Analogous definitions applies to the space $X_{0}=X_{0}\left(  \delta\right)
.$

There is another functional space that we will use in the formulation of our
problem and that is susceptible to be used as a set of controls. It is the
space of diffusion coefficients, that is
\[
\mathcal{H}\doteq\left\{  h:\Omega_{\delta}\rightarrow\mathbb{R}\mid h\left(
x\right)  \in\lbrack h_{\min},h_{\max}]\text{ a.e. }x\in\Omega,\text{
}h=0\text{ in }\Omega_{\delta}-\Omega\right\}  ,
\]
where $h_{\min}\ $and $h_{\max}$ are positive constants such that $0<h_{\min
}<h_{\max}.$

\subsection{Formulation of the problems}

\subsubsection{Nonlocal Optimal control in the source.\textbf{\ }}

The nonlocal optimal control in the source is an optimal control problem
denoted by $\left(  \mathcal{P}^{\delta}\right)  $ whose formulation is as
follows: the problem, for each $\delta>0$ fixed, consists on finding $g\in
L^{p^{\prime}}\left(  \Omega\right)  $ such that minimizes the functional%
\begin{equation}
I_{\delta}\left(  g,u\right)  =\int_{\Omega}F\left(  x,u\left(  x\right)
,g\left(  x\right)  \right)  dx,\label{a}%
\end{equation}
where $u\in X$ is the solution of the nonlocal boundary problem $\left(
P^{\delta}\right)  $%
\begin{equation}
\left\{
\begin{tabular}
[c]{l}%
$\displaystyle B_{h}\left(  u,w\right)  =\int_{\Omega}g\left(  x\right)
w\left(  x\right)  dx,\text{ for any }w\in X_{0},\smallskip$\\
$\displaystyle u=u_{0}\text{ in }\Omega_{\delta}\setminus\Omega,$%
\end{tabular}
\right.  \ \ \ \ \ \label{b}%
\end{equation}
where%
\begin{equation}
B_{h}\left(  u,w\right)  =\int_{\Omega_{\delta}}\int_{\Omega_{\delta}}H\left(
x^{\prime},x\right)  k_{\delta}\left(  \left\vert x^{\prime}-x\right\vert
\right)  \frac{\left\vert u\left(  x^{\prime}\right)  -u\left(  x\right)
\right\vert ^{p-2}\left(  u\left(  x^{\prime}\right)  -u\left(  x\right)
\right)  }{\left\vert x^{\prime}-x\right\vert ^{p}}\left(  v\left(  x^{\prime
}\right)  -v\left(  x\right)  \right)  dx^{\prime}dx\label{bb}%
\end{equation}
$H\left(  x^{\prime},x\right)  \doteq\frac{h\left(  x^{\prime}\right)
+h\left(  x\right)  }{2}$ and $u_{0}$ is a given function. The above nonlocal
boundary condition, $u=u_{0}$ in $\Omega_{\delta}\setminus\Omega,$ must be
interpreted in the sense of traces. Indeed, in order to make sense it is
necessary that $u_{0}\ $belongs to the space $\widetilde{X}_{0}=\left\{
\left.  w\right\vert _{\Omega_{\delta}\setminus\Omega}:w\in X\right\}  $. This
space is well defined independently of the parameter $s\in\left(  0,1\right)
$ we choose in (\ref{2}). It is easy to check that a norm for this space is
the one defined as
\[
\left\Vert v\right\Vert _{\widetilde{X}_{0}}=\inf\left\{  \left\Vert
w\right\Vert _{L^{p}\left(  \Omega_{\delta}\right)  }+\left(  B\left(
w,w\right)  \right)  ^{1/p}:w\in X\text{ such that }\left.  w\right\vert
_{\Omega_{\delta}\setminus\Omega}=v\right\}  .
\]

The integrand $F$ that appear in the cost we want to optimize, is under the
format%
\begin{equation}
F\left(  x,u\left(  x\right)  ,g\left(  x\right)  \right)  =G\left(
x,u\left(  x\right)  \right)  +\beta\left\vert g\left(  x\right)  \right\vert
^{p^{\prime}}+\gamma B_{h_{0}}\left(  u,u\right)  ,\label{integrand_source}%
\end{equation}
where $\beta$ and $\gamma$ are given positive constants, $h_{0}\in\mathcal{H}$
is also given, and $G:\mathbb{R}\times\mathbb{R\rightarrow R}$ is assumed to
be a measurable positive function such that $G(x,\cdot)$ is uniformly
Lipschitz continuous, that is, for any $x\in\Omega$ and any $\left(
u,v\right)  \in\mathbb{R}^{2}$ there exists a positive constant $L$ such that
$\left\vert G\left(  x,u\right)  -G\left(  x,v\right)  \right\vert \leq
L\left\vert u-v\right\vert .$

We formulate the nonlocal optimal control problem as%
\begin{equation}
\min_{\left(  g,u\right)  \in\mathcal{A}^{\delta}}I_{\delta}\left(  g,u\right)
\label{NLCP}%
\end{equation}
where%
\[
\mathcal{A}^{\delta}\doteq\left\{  \left(  f,v\right)  \in L^{p^{\prime}%
}\left(  \Omega\right)  \times X:v\text{ solves (\ref{b}) with }g=f\right\}  .
\]
Recall that under the above circumstances, the nonlocal participating states
in (\ref{b}), can be viewed as elements of the convex space
\[
u_{0}+X_{0}\doteq\left\{  v\in X:v=u_{0}+w\text{ where }w\in X_{0}\right\}  .
\]

\subsubsection{The local optimal control in the source}

The corresponding local optimal control is a problem denoted by $\left(
\mathcal{P}^{loc}\right)  $ whose goal is to find $g\in L^{p^{\prime}}\left(
\Omega\right)  $ such that minimizes the functional%
\begin{equation}
I\left(  g,u\right)  =\int_{\Omega}\left(  G\left(  x,u\left(  x\right)
\right)  +\beta\left\vert g\left(  x\right)  \right\vert ^{p^{\prime}}\right)
dx+\gamma b_{h_{0}}\left(  u,u\right)  ,\label{c}%
\end{equation}
where $u\in W^{1,p}\left(  \Omega\right)  $ is the solution of the local
boundary problem $\left(  P^{loc}\right)  $%
\begin{equation}
\left\{
\begin{tabular}
[c]{l}%
$\displaystyle b_{h}\left(  u,w\right)  =\int_{\Omega}g\left(  x\right)
w\left(  x\right)  dx,\text{ for any }w\in W_{0}^{1,p}\left(  \Omega\right)
\smallskip$\\
$\displaystyle u=u_{0}\text{ in }\partial\Omega,$%
\end{tabular}
\right.  \ \ \ \label{d}%
\end{equation}
$b\left(  \cdot,\cdot\right)  $ is the operator defined in $W^{1,p}\left(
\Omega\right)  \times W^{1,p}\left(  \Omega\right)  $ by means of%
\[
b_{h}\left(  u,v\right)  \doteq\int_{\Omega}h\left(  x\right)  \left\vert
\nabla u\left(  x\right)  \right\vert ^{p-2}\nabla u\left(  x\right)  \nabla
v\left(  x\right)  dx,
\]
$h_{0}\in\mathcal{H}$ and $u_{0}$ is a given function from the trace
fractional Sobolev space $W^{1-1/p,p}\left(  \partial\Omega\right)  $.
Throughout the article, when dealing with the local case, we shall assume
$\Omega$ is a bounded smooth domain.

The statement of the local optimal control problem is%
\begin{equation}
\min_{\left(  g,u\right)  \in\mathcal{A}^{loc}}I\left(  g,u\right) \label{e}%
\end{equation}
where%
\[
\mathcal{A}^{loc}\doteq\left\{  \left(  f,v\right)  \in L^{p^{\prime}}\left(
\Omega\right)  \times W^{1,p}\left(  \Omega\right)  :v\text{ solves (\ref{d})
with }g=f\right\}  .
\]
As usual, if we identify $u_{0}$ with a function $V_{0}\in W^{1,p}\left(
\Omega\right)  $ whose trace is $u_{0},$ and at the same time, $V_{0}$ is
denoted by $u_{0}$ too, then, as usual, the competing states are those that
form space
\[
u_{0}+W_{0}^{1,p}\left(  \Omega\right)  \doteq\left\{  v\in W^{1,p}\left(
\Omega\right)  :v=u_{0}+w\text{ where }w\in W_{0}^{1,p}\left(  \Omega\right)
\right\}  .
\]

The analysis of this type of problems is a subject that has been extensively
studied in previous works \cite{Gunzburger-Du-L-1, Ak, Hinds-Radu,
D'Elia-Gunz2, D'Elia-Gunz2b}. As far as the author knows, the first work
dealing with nonlocal optimal control problems is \cite{D'Elia-Gunz1}. A
series of articles containing different type of controls have appeared in the
last years. Some good samples are \cite{D'Elia-Gunz1, D'Elia-Gunz2,
FernandezBonder, Warma}. About the analysis of $G$-convergence or $\Gamma
$-convergence the reader can consult \cite{Ponce2, Zhou, D'Elia-Gunz1,
Mengesha2, bellidob, Mengesha}. We can find some theoretical advances about
the explicit computation of the limit problem. In this sense we must underline
among others \cite{Mengesha2, Bonder, Bellido-Egrafov, waurick, Du}. Much more
should be commented about the influence that this type of problems has
received from an outstanding list of seminal papers whose main topic, has been
the analysis and characterization of Sobolev Spaces. See for instance
\cite{Bourgain-Brezis, Ponce2, Rossi2, Rossi3, Mazon,
DiNezza-Palatucci-Valdinoci, Du}. In what concerns the numerical analysis of
nonlocal problems see \cite{DeliaNumer} and references therein.

We must say that to a great extent, the work \cite{D'Elia-Gunz1} has served as
inspiration for the present article. Nonetheless, we must emphasize the
techniques we use here, in some aspects, substantially differ from the ones
employed there. One of the features of our development is the usage of a
principle of minimum energy in the in order to characterize the $G$%
-convergence of the state equation (see \cite[Chapter 5, p. 162]{Jikov} for a
detailed study in a concrete linear case). Recall that since we are dealing
with the exponent $p>1,$ the linearity for the $p$-laplacian disappears and
consequently, the classical Lax-Milgram Theorem no longer applies. Besides, in
\cite{D'Elia-Gunz1} this linearity and the specificity of the type of cost
functionals, jointly with the necessary conditions of optimality are the key
points for the achievement of existence of optimal controls. By contrast, in
our context, the proof of existence, both for the state equation and the
optimal control problem, is obtained by means of the Direct Method and the
result of $G$-convergence. After, we prove convergence of the nonlocal state
equation and the nonlocal optimal control problem to the local ones. Even
though these achievements could be significant since the analysis could be
applied to a rather general class of cost functionals, the results obtained
for the particular case $p=2$ are not less attractive. The reason is that for
such a case the non-local model can approximate classical problems including
the squared gradient within the cost functional. Though we have not examined
any numerical method for the approximation of solutions yet, some techniques
derived from a maximum principle (see \cite{Cea-Mala} for the local case)
could be explored in order to build a descent method for the case $p=2$ (see
\cite{Andres}).

\subsection{Results and organization}

The purpose of this manuscript is twofold: there is a first part of the paper
devoted to study the existence of nonlocal optimal designs. This objective is
achieved for a cost functional class whose format may include the non-linear
term of the non-local operator. See Theorem \ref{Well-posedness} in Section
\ref{S3}. The proof of this theorem is basically, based on a previous result
of $G$- convergence (Theorem \ref{G-convergence} in Section \ref{S3}). The aim
of the second part is the convergence of the nonlocal problem toward the local
optimal design one. In a first stage we prove convergence of the state
equation to the classical $p$-laplacian when the \textit{horizon}
$\delta\rightarrow0$ (Theorem \ref{Th2} in Section \ref{S4}). Then, we face
the study of convergence for the optimal design problem. The main result is
Theorem \ref{Th3}, in Section \ref{S5}. A case of particular interest, the one
that we assume $p=2,$ is analyzed. The type of cost functional for which we
study the convergence, includes the nonlocal gradient, and consequently, the
local counterpart optimal problem we approximate contains the square of the
gradient (Theorem \ref{Th4} in Section \ref{S5}). In order to facilitate the
reading of the article, some specific preliminary results are previously
explained in Section \ref{S2}. Some compactness and basic inequalities are
commented, and the proof of existence of solution for the nonlocal state
problem is analyzed (Theorem \ref{Th1}).

\section{Preliminary results and well-posedness of the state
equation\label{S2}}

\subsection{Preliminaries\label{S2_1}}

Here we review some technical tools we are going to use.

\begin{enumerate}
\item \label{S2_1_1}The embedding%
\[
X_{0}\subset L^{p}\left(  \Omega\right)
\]
is compact. In order to check that we firstly notice $X_{0}\subset
W^{s,p}\left(  \Omega_{\delta}\right)  ,$ and since the elements of $X_{0}$
vanish in $\Omega_{\delta}\setminus\Omega,$ then extension by zero outside
$\Omega_{\delta}$ gives rise to elements of $W^{s,p}\left(  \mathbb{R}%
^{N}\right)  $ (see \cite[Lemma 5.1]{DiNezza-Palatucci-Valdinoci}). Then
\[
X_{0}\subset W_{0}^{s,p}\left(  \Omega\right)  =\left\{  f\in W^{s,p}\left(
\mathbb{R}^{N}\right)  :f=0\text{ in }\mathbb{R}^{N}\setminus\Omega\right\}
\]
Besides, we are in position to state the existence of a constant $c=c\left(
N,s,p\right)  $ such that for any $w\in X_{0}$%
\begin{equation}
c\left\Vert w\right\Vert _{L^{p}\left(  \Omega_{\delta}\right)  }^{p}\leq
\int_{\Omega_{\delta}}\int_{\Omega_{\delta}}\frac{\left\vert w\left(
x^{\prime}\right)  -w\left(  x\right)  \right\vert ^{p}}{\left\vert x^{\prime
}-x\right\vert ^{N+sp}}dx^{\prime}dx\label{Pre0}%
\end{equation}
(see \cite[Th. 6.5]{DiNezza-Palatucci-Valdinoci}). By paying attention to the
hypotheses on the kernel (\ref{2}), and using (\ref{Pre0}) we conclude there
is a positive constant $C$ such that the nonlocal Poincar\'{e} inequality
\begin{equation}
C\left\Vert w\right\Vert _{L^{p}\left(  \Omega_{\delta}\right)  }^{p}\leq
B_{h}\left(  w,w\right)  .\label{Prel3}%
\end{equation}
holds for any $w\in X_{0}.$\newline We consider now a sequence $\left(
w_{j}\right)  _{j}\in X_{0}$ uniformly bounded in $X_{0},$ that is, there is a
constant $C$ such that for every $j$
\begin{equation}
B_{h}\left(  w_{j},w_{j}\right)  \leq C.\label{Prel1}%
\end{equation}
By (\ref{Prel3}) $\left(  w_{j}\right)  _{j}$ is uniformly bounded
$L^{p}\left(  \Omega_{\delta}\right)  \ $which, jointly with (\ref{2})
guarantees $\left(  w_{j}\right)  _{j}$ is uniformly bounded in $W_{0}%
^{s,p}\left(  \Omega_{\delta}\right)  $. We employ now the compact embedding
$W_{0}^{s,p}\left(  \Omega_{\delta}\right)  \subset L^{p}\left(
\Omega_{\delta}\right)  $ (see \cite[Th. 7.1]{DiNezza-Palatucci-Valdinoci}) to
ensure the existence of a subsequence from $\left(  w_{j}\right)  _{j},$ still
denoted by $\left(  w_{j}\right)  _{j},$ such that $w_{j}\rightarrow w$
strongly in $L^{p}\left(  \Omega_{\delta}\right)  ,$ for some $w\in X_{0}.$

\item \label{S2_1_2}If we take a sequence $\left(  w_{j}\right)  _{j}$ from
$u_{0}+X_{0}$ such that $B_{h}\left(  w_{j},w_{j}\right)  \leq C,$ then
$u_{j}-u_{0}\in X_{0}.$ But, since%
\[
B_{h}\left(  w_{j}-u_{0},w_{j}-u_{0}\right)  \leq c\left(  B_{h}\left(
w_{j},w_{j}\right)  +B_{h}\left(  u_{0},u_{0}\right)  \right)
\]
for a certain constant $c,$ then%
\[
B_{h}\left(  w_{j}-u_{0},w_{j}-u_{0}\right)  \leq C
\]
If we apply now the nonlocal Poincar\'{e} inequality (\ref{Prel3}) we have%
\begin{equation}
C\left\Vert w_{j}-u_{0}\right\Vert _{L^{p}}^{p}\leq B_{h}\left(  w_{j}%
-u_{0},w_{j}-u_{0}\right)  ,\label{Prel2}%
\end{equation}
whereby we state the sequence $\left(  w_{j}\right)  _{j}$ is uniformly
bounded in $L^{p}\left(  \Omega_{\delta}\right)  $ and therefore, there exists
a function $w\in L^{p}\left(  \Omega\right)  $ such that, for a subsequence of
$\left(  w_{j}\right)  _{j},$ still denoted by $\left(  w_{j}\right)  _{j},$
$w_{j}\rightarrow u$ strongly in $L^{p}\left(  \Omega_{\delta}\right)  .$
Moreover, since $u_{0}+X_{0}$ is a closed set, $w\in u_{0}+X_{0}.$

\item \label{S2_1_3}Let $\left(  g_{\delta},u_{\delta}\right)  _{\delta}$ be a
sequence of pairs such that the uniform estimation%
\[
B_{h}\left(  u_{\delta},u_{\delta}\right)  \leq C,
\]
is fulfilled (where $C$ is a positive constant). Then, from $\left(
u_{\delta}\right)  _{\delta}$ we can extract a subsequence, labelled also by
$u_{\delta},$ such that $u_{\delta}\rightarrow u$ strongly in $L^{p}\left(
\Omega\right)  $ and $u\in W^{1,p}\left(  \Omega\right)  $ (see \cite[Th.
1.2]{Ponce1}). Furthermore, the following inequality is fulfilled%
\begin{equation}
\lim_{\delta\rightarrow0}B_{h}\left(  u_{\delta},u_{\delta}\right)  \geq
\int_{\Omega}h\left(  x\right)  \left\vert \nabla u\left(  x\right)
\right\vert ^{p}dx\label{Prel4}%
\end{equation}
(see \cite{Ponce1, Andres-Julio3, Munoz}). Besides, it is also well-known that
if $u_{\delta}=u\in W^{1,p}\left(  \Omega\right)  ,$ then the above limit is
\begin{equation}
\lim_{\delta\rightarrow0}B_{h}\left(  u,u\right)  =\int_{\Omega}h\left(
x\right)  \left\vert \nabla u\left(  x\right)  \right\vert ^{p}dx\label{Prel5}%
\end{equation}
(see \cite[Corollary 1]{Bourgain-Brezis} and \cite[Th. 8]{Andres-Julio2}).
\end{enumerate}

\subsection{The state equation}

For the well-posedness of the nonlocal control problem $\left(  \mathcal{P}%
_{s}^{\delta}\right)  $ it is imperative to prove existence and uniqueness for
the nonlocal boundary problem $\left(  P^{\delta}\right)  $. A remarkable fact
that will be employed for this goal is the characterization of (\ref{1}) by
means of a Dirichlet principle. For the proof, we just need to adapt (because
we have to include the nonlocal boundary condition $u_{0})$ the lines given in
\cite{Bonder}.\smallskip

Throughout this section, $u_{0}\in\widetilde{X}_{0},$ $\delta>0$ and $g$ $\in
L^{p^{\prime}}\left(  \Omega\right)  $ are assumed to be fixed. We seek a
solution to the problem (\ref{b}) and as we have commented, the crucial point
in this searching is the inherent relation of the nonlocal boundary problem
with the following minimization problem:%
\begin{equation}
\min_{w\in u_{0}+X_{0}}\mathcal{J}\left(  w\right)  \label{NL_Dirichlet}%
\end{equation}
where%
\[
\mathcal{J}\left(  w\right)  \doteq\frac{1}{p}B_{h}\left(  w,w\right)
-\int_{\Omega}g\left(  x\right)  w\left(  x\right)  dx.
\]

\begin{lemma}
There exists a solution $u\in u_{0}+X_{0},$ to the problem of minimization
(\ref{NL_Dirichlet}).
\end{lemma}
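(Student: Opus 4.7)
The plan is to establish existence by the direct method of the calculus of variations, with the coercivity coming from the nonlocal Poincar\'{e} inequality (\ref{Prel3}) and the compactness from item \ref{S2_1_2} of Section \ref{S2_1}. First I will verify that $\mathcal{J}$ is bounded below on $u_{0}+X_{0}$. Given $w=u_{0}+\tilde{w}$ with $\tilde{w}\in X_{0}$, the fact that $B_{h}(\cdot,\cdot)^{1/p}$ is a seminorm (a consequence of Minkowski's inequality applied to the integrand $|w(x^{\prime})-w(x)|^{p}$) yields
\[
B_{h}(\tilde{w},\tilde{w})\leq C\bigl(B_{h}(w,w)+B_{h}(u_{0},u_{0})\bigr).
\]
Applying (\ref{Prel3}) to $\tilde{w}\in X_{0}$ and using the triangle inequality in $L^{p}(\Omega_{\delta})$ together with $u_{0}\in\widetilde{X}_{0}$, I obtain $\Vert w\Vert_{L^{p}(\Omega_{\delta})}^{p}\leq C_{1}B_{h}(w,w)+C_{2}$. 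The linear term is estimated by H\"{o}lder and Young's inequalities: for any $\epsilon>0$,
\[
\Bigl\vert\int_{\Omega}g(x)w(x)\,dx\Bigr\vert\leq\Vert g\Vert_{L^{p^{\prime}}}\Vert w\Vert_{L^{p}(\Omega_{\delta})}\leq\epsilon B_{h}(w,w)+C(\epsilon,g,u_{0}).
\]
Choosing $\epsilon<1/p$ gives the coercive lower bound $\mathcal{J}(w)\geq(1/p-\epsilon)B_{h}(w,w)-C_{3}$.

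Next I take a minimizing sequence $(w_{j})\subset u_{0}+X_{0}$ with $\mathcal{J}(w_{j})\to\inf\mathcal{J}$. The coercivity above forces $B_{h}(w_{j},w_{j})\leq C$ uniformly, so item \ref{S2_1_2} of Section \ref{S2_1} applies: after extraction, $w_{j}\to w$ strongly in $L^{p}(\Omega_{\delta})$ with $w\in u_{0}+X_{0}$ (and, passing to a further subsequence, $w_{j}\to w$ almost everywhere on $\Omega_{\delta}$).

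It remains to verify that $\mathcal{J}(w)\leq\liminf_{j}\mathcal{J}(w_{j})$. The linear part is easy: since $g\in L^{p^{\prime}}(\Omega)$ and $w_{j}\to w$ strongly in $L^{p}(\Omega)$, we have $\int_{\Omega}g\,w_{j}\,dx\to\int_{\Omega}g\,w\,dx$. For the nonlocal term, the pointwise a.e.\ convergence $w_{j}(x^{\prime})-w_{j}(x)\to w(x^{\prime})-w(x)$ on $\Omega_{\delta}\times\Omega_{\delta}$, combined with the nonnegativity of the integrand $H(x^{\prime},x)k_{\delta}(|x^{\prime}-x|)|w_{j}(x^{\prime})-w_{j}(x)|^{p}/|x^{\prime}-x|^{p}$, allows Fatou's lemma to give $B_{h}(w,w)\leq\liminf_{j}B_{h}(w_{j},w_{j})$. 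Adding the two limits yields $\mathcal{J}(w)\leq\liminf_{j}\mathcal{J}(w_{j})=\inf_{u_{0}+X_{0}}\mathcal{J}$, so $w$ is the sought minimizer.

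The only delicate point is making sure the boundary datum survives the limiting procedure; this is exactly what item \ref{S2_1_2} is designed to handle, since $u_{0}+X_{0}$ is closed under the topology of strong $L^{p}$ convergence of sequences with uniformly bounded nonlocal energy. Apart from that, every step is a routine application of standard tools, so the main verification is bookkeeping: writing the competing states as $u_{0}+\tilde{w}$ and carefully transferring the coercivity estimate from the subspace $X_{0}$ (where the Poincar\'{e} inequality is available) to the affine set $u_{0}+X_{0}$.
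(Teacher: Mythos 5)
Your proposal is correct and follows essentially the same route as the paper: boundedness below and coercivity via the nonlocal Poincar\'{e} inequality combined with H\"{o}lder's and Young's inequalities, a minimizing sequence with uniformly bounded energy, strong $L^{p}$ compactness from item \ref{S2_1_2}, and lower semicontinuity to pass to the limit. Your explicit justification of the lower semicontinuity of $B_{h}$ via a.e.\ convergence and Fatou's lemma is a welcome elaboration of a step the paper states without detail, but it does not change the architecture of the argument.
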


\begin{proof}
First of all, we check $\mathcal{J}$ is bounded from below. Let $w$ be any
function from $X$ such that $w-u_{0}\in X_{0}$. By using the nonlocal
Poincar\'{e} inequality (\ref{Prel2}) there is a constant $c>o$ such that
\[
c\left\Vert w-u_{0}\right\Vert _{L^{p}}^{p}\leq B_{h}\left(  w,w\right)
+B_{h}\left(  u_{0},u_{0}\right)  ,
\]
whence we have%
\begin{equation}
\left\Vert w\right\Vert _{L^{p}}-\left\Vert u_{0}\right\Vert _{L^{p}}%
\leq\left\Vert w-u_{0}\right\Vert _{L^{p}}\leq\left(  \frac{B_{h}\left(
w,w\right)  +B_{h}\left(  u_{0},u_{0}\right)  }{c}\right)  ^{1/p}%
.\label{basic_ineq}%
\end{equation}
If we apply now the H\"{o}lder's inequality and Young's inequality we get
\begin{align*}
\mathcal{J}\left(  w\right)    & \geq\frac{1}{p}B_{h}\left(  w,w\right)
-\left\Vert g\right\Vert _{L^{p^{\prime}}}\left\Vert w\right\Vert _{L^{p}}\\
& \geq\frac{1}{p}B_{h}\left(  w,w\right)  -\left\Vert g\right\Vert
_{L^{p^{\prime}}}\frac{1}{c^{1/p}}\left(  B_{h}\left(  w,w\right)
+B_{h}\left(  u_{0},u_{0}\right)  \right)  ^{1/p}-\left\Vert g\right\Vert
_{L^{p^{\prime}}}\left\Vert u_{0}\right\Vert _{L^{p}}\\
& \geq\frac{1}{p}B_{h}\left(  w,w\right)  -\frac{1}{p}\left(  B_{h}\left(
w,w\right)  +B_{h}\left(  u_{0},u_{0}\right)  \right)  -\frac{1}{p^{\prime}%
}\left(  \frac{\left\Vert g\right\Vert _{L^{p^{\prime}}}}{c^{1/p}}\right)
^{p^{\prime}}-\left\Vert g\right\Vert _{L^{p^{\prime}}}\left\Vert
u_{0}\right\Vert _{L^{p}}\\
& =-\frac{1}{p}B_{h}\left(  u_{0},u_{0}\right)  -\frac{1}{p^{\prime}}\left(
\frac{\left\Vert g\right\Vert _{L^{p^{\prime}}}}{c^{1/p}}\right)  ^{p^{\prime
}}-\left\Vert g\right\Vert _{L^{p^{\prime}}}\left\Vert u_{0}\right\Vert
_{L^{p}}.
\end{align*}
To prove the existence of solution we take a minimizing sequence $\left(
u_{j}\right)  \subset u_{0}+X_{0}$ so that
\begin{equation}
m=\lim_{j\rightarrow\infty}\left(  \frac{1}{p}B_{h}\left(  u_{j},u_{j}\right)
-\int_{\Omega}g\left(  x\right)  u_{j}\left(  x\right)  dx\right)
\label{inf_1}%
\end{equation}
where $m$ is the infimum $\inf_{w\in u_{0}+X_{0}}\mathcal{J}\left(  w\right)
.$ From this convergence we ensure that there is a constant $C>0$ such that
\[
\frac{1}{p}B_{h}\left(  u_{j},u_{j}\right)  -\int g\left(  x\right)
u_{j}\left(  x\right)  dx\leq C
\]
for any $j.$ Thus, we get the estimation%
\begin{align*}
0 &  \leq B_{h}\left(  u_{j},u_{j}\right)  \leq C+\left\vert \int_{\Omega
}g\left(  x\right)  u_{j}\left(  x\right)  dx\right\vert \\
&  \leq C\left(  1+\left\Vert g\right\Vert _{L^{2}\left(  \Omega\right)
}\left\Vert u_{j}\right\Vert _{L^{2}\left(  \Omega\right)  }\right)
\end{align*}
with $C>0.$ Again, the nonlocal Poincar\'{e} inequality gives
\[
c\left\Vert u_{j}-u_{0}\right\Vert _{L^{p}}^{p}\leq B_{h}\left(  u_{j}%
,u_{j}\right)  +B_{h}\left(  u_{0},u_{0}\right)  \leq C\left(  1+\left\Vert
g\right\Vert _{L^{p^{\prime}}\left(  \Omega\right)  }\left\Vert u_{j}%
\right\Vert _{L^{p}\left(  \Omega\right)  }\right)  +B_{h}\left(  u_{0}%
,u_{0}\right)
\]
and therefore%
\[
\left\Vert u_{j}\right\Vert _{L^{p}}^{p}\leq C\left(  1+\left\Vert
g\right\Vert _{L^{p^{\prime}}\left(  \Omega\right)  }\left\Vert u_{j}%
\right\Vert _{L^{p}\left(  \Omega\right)  }+\left\Vert u_{0}\right\Vert
_{L^{p}}^{p}\right)
\]
From the two above inequalities we deduce the sequences $B_{h}\left(
u_{j},u_{j}\right)  \ $and $\left\Vert u_{j}\right\Vert _{L^{p}}$ are
uniformly bounded. By virtue of the compactness embedding $X_{0}\subset
L^{p},$ we know there is a subsequence of $\left(  u_{j}\right)  ,$ which will
be denoted also by $\left(  u_{j}\right)  ,$ strongly convergent in
$L^{p}\left(  \Omega\right)  $ to some $u\in u_{0}+X_{0}.$\newline We retake
(\ref{inf_1}) and use the lower semicontinuity in $L^{p}$ of the operator
$\mathcal{J}$ to write%
\begin{align*}
m &  =\lim_{j}\frac{1}{p}B_{h}\left(  u_{j},u_{j}\right)  -\lim_{j}%
\int_{\Omega}g\left(  x\right)  u_{j}\left(  x\right)  dx\\
&  \geq\frac{1}{p}B_{h}\left(  u,u\right)  -\int_{\Omega}g\left(  x\right)
u\left(  x\right)  dx\\
&  =\mathcal{J}\left(  u\right)  .
\end{align*}
From this inequality we conclude that $u$ is a minimizer.
\end{proof}

\begin{lemma}
$u$ is a solution of the minimization principle (\ref{NL_Dirichlet}) if, and
only if, $u$ solves the problem (\ref{b}).
\end{lemma}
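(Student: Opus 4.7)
The plan is to recognize this as the classical Euler--Lagrange characterization of the minimizer of a convex, Gateaux-differentiable functional. First, I would observe that $u\mapsto B_h(u,u)$ is convex on $X$: writing
\[
B_h(u,u)=\int_{\Omega_\delta}\int_{\Omega_\delta}H(x',x)\,k_\delta(|x'-x|)\,\frac{|u(x')-u(x)|^p}{|x'-x|^p}\,dx'\,dx,
\]
the integrand is the composition of the convex map $t\mapsto|t|^p$ with the linear form $u\mapsto u(x')-u(x)$, weighted by the nonnegative measure $H(x',x)\,k_\delta(|x'-x|)\,|x'-x|^{-p}\,dx'\,dx$. Consequently $\mathcal{J}$, being $\frac{1}{p}B_h(\cdot,\cdot)$ plus a linear term, is convex on the affine space $u_0+X_0$.

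Next I would establish that $\mathcal{J}$ is Gateaux differentiable at every $u\in u_0+X_0$, with derivative in the direction $w\in X_0$ given by
\[
\mathcal{J}'(u)[w]=B_h(u,w)-\int_\Omega g(x)\,w(x)\,dx.
\]
This amounts to computing the limit as $t\to 0$ of $[B_h(u+tw,u+tw)-B_h(u,u)]/(pt)$. The pointwise limit of the inner quotient is exactly the integrand of $B_h(u,w)$, via the chain rule applied to $t\mapsto|t|^p$. To pass the limit under the double integral, I would invoke dominated convergence with a dominating function extracted from the mean-value bound $\big||a+tb|^p-|a|^p\big|\leq C_p|t|\bigl(|a|^{p-1}+|t|^{p-1}|b|^{p-1}\bigr)|b|$, valid for every $p>1$. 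After division by $|x'-x|^p$ and weighting by $H\,k_\delta$, this envelope is integrable over $\Omega_\delta\times\Omega_\delta$ by H\"older's inequality against the finite quantities $B_h(u,u)^{(p-1)/p}B_h(w,w)^{1/p}$ and $B_h(w,w)$, both of which are controlled since $u,w\in X$.

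With convexity and the explicit Gateaux derivative in hand, both implications are immediate. If $u$ minimizes $\mathcal{J}$ on $u_0+X_0$, then for every $w\in X_0$ the real function $t\mapsto\mathcal{J}(u+tw)$ attains its minimum at $t=0$, whence $\mathcal{J}'(u)[w]=0$, which is precisely (\ref{b}). Conversely, if $u\in u_0+X_0$ solves (\ref{b}), then for any $v\in u_0+X_0$ the difference $w=v-u$ belongs to $X_0$, and convexity together with the vanishing of $\mathcal{J}'(u)$ on $X_0$ yields $\mathcal{J}(v)\geq\mathcal{J}(u)+\mathcal{J}'(u)[v-u]=\mathcal{J}(u)$, so $u$ is a minimizer. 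The main technical hurdle is the dominated-convergence step producing $\mathcal{J}'(u)$; once the $L^{p'}$--$L^p$ duality of the difference quotients against the measure $H\,k_\delta\,|x'-x|^{-p}\,dx'\,dx$ is used to build an integrable envelope, the rest is routine.
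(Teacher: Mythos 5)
Your proof is correct, and its overall shape is the classical Euler--Lagrange/Dirichlet-principle correspondence that the paper also uses; the ``minimizer implies weak solution'' direction is essentially identical (restrict $\mathcal{J}$ to the line $t\mapsto u+t\xi$ and set the derivative at $t=0$ to zero), and you are in fact more careful than the paper, which simply asserts that $j'(0)=0$ ``can be easily rewritten'' without the dominated-convergence justification you supply. The converse direction is where you genuinely diverge: the paper never invokes convexity or Gateaux differentiability there, but instead substitutes $w=u-v$ into the weak formulation to get $B_h(u,u)=B_h(u,v)+\int_\Omega g(u-v)\,dx$ and then bounds the cross term by H\"older and Young, $B_h(u,v)\le\frac{1}{p}B_h(v,v)+\frac{1}{p'}B_h(u,u)$, which rearranges directly to $\mathcal{J}(u)\le\mathcal{J}(v)$. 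You instead use convexity of $\mathcal{J}$ together with the gradient inequality $\mathcal{J}(v)\ge\mathcal{J}(u)+\mathcal{J}'(u)[v-u]$. Both are valid; the paper's route is more elementary for that implication (no differentiability needed, only a pointwise Young inequality), while yours is the more systematic argument and reuses the derivative computation you already needed for the other direction. One small caution: your claimed mean-value envelope $\bigl||a+tb|^p-|a|^p\bigr|\le C_p|t|\bigl(|a|^{p-1}+|t|^{p-1}|b|^{p-1}\bigr)|b|$ is fine for all $p>1$, but it is worth noting explicitly that for $p\ge 2$ it comes from $(|a|+|t||b|)^{p-1}\le 2^{p-2}(|a|^{p-1}+|t|^{p-1}|b|^{p-1})$ rather than subadditivity; as stated the constant absorbs this and the argument goes through.
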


The proof is standard. Assume $u$ solves (\ref{b}). We have only note that if
we take any $v\in u_{0}+X_{0}$ then $w\doteq u-v\in X_{0}$ and $B_{h}\left(
u,w\right)  =\int_{\Omega}g\left(  x\right)  w\left(  x\right)  dx,$ that is%
\[
B_{h}\left(  u,u\right)  =B_{h}\left(  u,v\right)  +\int_{\Omega}g\left(
u-v\right)  dx.
\]
By applying Young's inequality to the first term of the right part in the
above equality we get%
\[
B_{h}\left(  u,u\right)  \leq\frac{1}{p}B_{h}\left(  v,v\right)  +\frac
{1}{p^{\prime}}B_{h}\left(  u,u\right)  +\int_{\Omega}g\left(  u-v\right)  dx,
\]
and thus%
\[
\frac{1}{p}B_{h}\left(  u,u\right)  -\int_{\Omega}gudx\leq\frac{1}{p}%
B_{h}\left(  v,v\right)  -\int_{\Omega}gvdx
\]
which is equivalent to write $\mathcal{J}\left(  u\right)  \leq\mathcal{J}%
\left(  v\right)  .$\newline And reciprocally, if $u$ is a minimizer of
$\mathcal{J}$ on $u_{0}+X_{0}$ then we can take the admissible function
$w=u+t\xi,$ where $\xi$ is any element form $X_{0}.$ Since the function
$j\left(  t\right)  =\mathcal{J}\left(  u+t\xi\right)  $ attains a minimum at
$t=0,$ then $j^{\prime}\left(  0\right)  =0$ and this equality can be easily
rewritten as $B_{h}\left(  u,\xi\right)  =\int_{\Omega}g\left(  x\right)
\xi\left(  x\right)  dx.$

\begin{theorem}
\label{Th1}There exists a unique solution to the nonlocal boundary problem
$\left(  P^{\delta}\right)  $ given in (\ref{b}).
\end{theorem}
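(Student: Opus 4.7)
My plan is that the theorem is essentially a corollary of the two preceding lemmas, supplemented by a short uniqueness argument. For existence, I would simply chain the two lemmas: the first produces a minimizer $u \in u_0 + X_0$ of the Dirichlet-type functional $\mathcal{J}$, and the second identifies such minimizers with the weak solutions of (\ref{b}). Nothing further is needed on that side.

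For uniqueness, I would use the monotonicity of the nonlocal $p$-Laplacian operator. Suppose $u_1,u_2\in u_0+X_0$ are two solutions and set $v:=u_1-u_2$. Since both agree with $u_0$ on $\Omega_\delta\setminus\Omega$, one has $v\in X_0$, so $v$ is itself an admissible test function. Subtracting the two weak formulations and testing with $w=v$ should yield
\[
\int_{\Omega_\delta}\!\int_{\Omega_\delta} H(x',x)\,k_\delta(|x'-x|)\,\frac{\bigl[\Phi_p(a)-\Phi_p(b)\bigr]\,(a-b)}{|x'-x|^p}\,dx'\,dx \;=\; 0,
\]
where $\Phi_p(s):=|s|^{p-2}s$, $a:=u_1(x')-u_1(x)$, $b:=u_2(x')-u_2(x)$, and $a-b=v(x')-v(x)$. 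The elementary monotonicity inequality $(\Phi_p(a)-\Phi_p(b))(a-b)\geq 0$ (valid for all $p>1$, and strict unless $a=b$), together with $H,k_\delta\geq 0$, forces the integrand to vanish a.e., so $v(x')=v(x)$ a.e.\ on the support of $H\,k_\delta$. Consequently $B_h(v,v)=0$, and the nonlocal Poincaré inequality (\ref{Prel3})—which applies because $v\in X_0$—then gives $\|v\|_{L^p(\Omega_\delta)}=0$, i.e.\ $u_1=u_2$.

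I do not foresee a serious obstacle: the two lemmas do all the work for existence, and uniqueness reduces to the classical monotonicity trick for $p$-Laplacian-type operators. As a safety net, should the use of the monotonicity inequality ever feel fragile (for instance in the regime $1<p<2$, where no sharp lower bound of the form $c|a-b|^p$ is available), an equivalent route is via the strict convexity of $\mathcal{J}$ on the affine set $u_0+X_0$, which follows from the strict convexity of $t\mapsto|t|^p$ for $p>1$ combined with (\ref{Prel3}) to rule out degenerate directions; any two minimizers $u_1\neq u_2$ would then render $\mathcal{J}((u_1+u_2)/2)$ strictly smaller than $\tfrac12(\mathcal{J}(u_1)+\mathcal{J}(u_2))$, contradicting minimality.
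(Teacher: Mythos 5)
Your proposal is correct, and the existence half coincides with the paper's: the theorem is obtained by chaining the two lemmas (existence of a minimizer of $\mathcal{J}$ on $u_{0}+X_{0}$, plus the equivalence between minimizers and weak solutions of (\ref{b})). Where you genuinely diverge is in the uniqueness argument. The paper's proof of Theorem \ref{Th1} uses only the variational characterization: two minimizers $u,v$ with $\mathcal{J}(u)=\mathcal{J}(v)=m$ and the strict convexity of $\mathcal{J}$ force $\mathcal{J}(\alpha u+(1-\alpha)v)<m$, a contradiction. Your primary route instead works at the level of the operator: subtract the two weak formulations, test with $v=u_{1}-u_{2}\in X_{0}$, invoke the strict monotonicity of $\Phi_p(s)=|s|^{p-2}s$ to kill the integrand, and finish with the nonlocal Poincar\'{e} inequality (\ref{Prel3}). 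This is exactly the scheme the paper itself uses for uniqueness of the \emph{local} state equation (\ref{d}) via the elementary inequality (\ref{ele_ine}), so your argument transfers that method to the nonlocal setting; it has the advantage of not relying on the Dirichlet principle at all, hence it would also cover solutions of (\ref{b}) not obtained by minimization. Your fallback argument is precisely the paper's, and in fact you are more careful than the paper on the one point it glosses over: $w\mapsto B_{h}(w,w)$ is only convex, not strictly convex, on all of $X$ (it is blind to directions with $v(x')=v(x)$ a.e.\ on the support of the kernel), and one does need (\ref{Prel3}) to rule out such degenerate directions within $X_{0}$ before the strict-inequality step $m\leq\mathcal{J}(\alpha u+(1-\alpha)v)<m$ is legitimate. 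Either of your two routes is a complete and valid proof.
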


All that remains is to prove the uniqueness. The proof is automatic due to the
convexity of $\mathcal{J}:$ indeed, if $u$ and $v$ are minimizers, then
\[
m=\min_{w\in u_{0}+X_{0}}\mathcal{J}\left(  w\right)  =J\left(  u\right)
=J\left(  v\right)  .
\]
Besides, for any $\alpha\in\left(  0,1\right)  ,$ the function $\alpha
u+\left(  1-\alpha\right)  v$ is admissible for minimization principle. Then,
thanks to the strict convexity of $\mathcal{J},$ we deduce that
\[
m\leq\mathcal{J}\left(  \alpha u+\left(  1-\alpha\right)  v\right)
<\alpha\mathcal{J}\left(  u\right)  +\left(  1-\alpha\right)  \mathcal{J}%
\left(  v\right)  =m
\]
which is a contradiction.

\begin{remark}
The result remains valid if we assume $g$ to be in the space $X_{0}^{\prime
}\ $and the proof follows along the same lines from above.
\end{remark}

The existence and uniqueness of solution for the local state equation $\left(
P^{loc}\right)  $ is a basic issue. Even we have to adapt some details,
\cite{Chipot} is a reference we can follow in order to carry out this task.
Although the details about the proof are interesting, the uniqueness is an
aspect that could be analyzed apart. Indeed, if $u$ and $v$ are two different
solutions of the state equation (\ref{d}), then $b_{h}\left(  u,w\right)
=b\left(  v,w\right)  $ for any $w\in X_{0}.$ This is to say that for any
$w\in X_{0}$%
\begin{equation}
\int_{\Omega}h\left(  x\right)  \left(  \left\vert \nabla u\left(  x\right)
\right\vert ^{p-2}\nabla u\left(  x\right)  \nabla w\left(  x\right)
-\left\vert \nabla v\left(  x\right)  \right\vert ^{p-2}\nabla v\left(
x\right)  \nabla w\left(  x\right)  \right)  dx=0.\label{uni}%
\end{equation}
By taking $w=u-v$ we obtain%
\[
\int_{\Omega}h\left(  x\right)  \left(  \left\vert \nabla u\left(  x\right)
\right\vert ^{p-2}\nabla u\left(  x\right)  -\left\vert \nabla v\left(
x\right)  \right\vert ^{p-2}\nabla v\left(  x\right)  \right)  \nabla\left(
u-v\right)  \left(  x\right)  dx=0.
\]
At this point we take into account the next elementary inequality: if
$1<p<\infty,$ then there exist two positive constants $C=C\left(  p\right)  $
and $c=c\left(  p\right)  $ such that for every $a,$ $b\in\mathbb{R}^{N}$%
\begin{equation}
c\left\{  \left\vert a\right\vert +\left\vert b\right\vert \right\}
^{p-2}\left\vert a-b\right\vert ^{2}\leq\left(  \left\vert a\right\vert
^{p-2}a-\left\vert b\right\vert ^{p-2}b\right)  \cdot\left(  a-b\right)  \leq
C\left\{  \left\vert a\right\vert +\left\vert b\right\vert \right\}
^{p-2}\left\vert a-b\right\vert ^{2}.\label{ele_ine}%
\end{equation}
Finally, by applying (\ref{ele_ine}) in (\ref{uni}) the uniqueness follows
(see \cite[Prop.17.3 and Th. 17.1]{Chipot}\textbf{).}

\section{$G$-Convergence for the state equation and existence of nonlocal
optimal controls\label{S3}}

Let $\left(  g_{j}\right)  _{j}$ be a minimizing sequences of controls for the
problem $\left(  \mathcal{P}^{\delta}\right)  $ and let $\left(  u_{j}\right)
_{j}$ be the corresponding sequence of states. As in the end the sequences we
are going to work with, are minimizing sequences, we shall assume that there
is a constant $C>0$ such that for any
\[
\int_{\Omega}\left\vert g_{j}\left(  x\right)  \right\vert ^{p^{\prime}}dx<C.
\]
Hence, we can extract a subsequence weakly convergent in $L^{p^{\prime}%
}\left(  \Omega\right)  $ to some $g\in L^{p^{\prime}}\left(  \Omega\right)
$. We also know the following variational equality for any $v\in X_{0}:$
\[
B_{h}\left(  u_{j},v\right)  =\int_{\Omega}g_{j}\left(  x\right)  v\left(
x\right)  dx
\]
In particular
\[
B_{h}\left(  u_{j},u_{j}-u_{0}\right)  =\int_{\Omega}g_{j}\left(  x\right)
\left(  u_{j}\left(  x\right)  -u_{0}\left(  x\right)  \right)  dx.
\]
Holder's inequality and the linearity of $B_{h}\left(  w,\cdot\right)  ,$ for
any $w\in X,$ lead us the estimation%
\[
B_{h}\left(  u_{j},u_{j}\right)  \leq\left\Vert g_{j}\right\Vert
_{L^{p^{\prime}}}\left(  \left\Vert u_{j}\right\Vert _{L^{p}}+\left\Vert
u_{0}\right\Vert _{L^{p}}\right)  +B_{h}\left(  u_{j},u_{0}\right)  .
\]
If we take into account (\ref{basic_ineq}) and make use of the Young's
inequality we deduce%
\begin{align*}
B_{h}\left(  u_{j},u_{j}\right)   & \leq\left\Vert g_{j}\right\Vert
_{L^{p^{\prime}}}\left(  \left(  \frac{B_{h}\left(  u_{j},u_{j}\right)
+B_{h}\left(  u_{0},u_{0}\right)  }{c}\right)  ^{1/p}+2\left\Vert
u_{0}\right\Vert _{L^{p}}\right) \\
& +\frac{1}{p^{\prime}}B_{h}\left(  u_{j},u_{j}\right)  +\frac{1}{p}%
B_{h}\left(  u_{0},u_{0}\right)  ,
\end{align*}
and thereby
\[
\left(  1-\frac{1}{p^{\prime}}\right)  B_{h}\left(  u_{j},u_{j}\right)  \leq
C+D\left(  B_{h}\left(  u_{j},u_{j}\right)  \right)  ^{1/p}%
\]
for some positive constants $C$ and $D.$ The above inequality implies
$B_{h}\left(  u_{j},u_{j}\right)  $ is uniformly bounded and by
(\ref{basic_ineq}) $\left\Vert u_{j}\right\Vert _{L^{p}}$ too. If at this
point we use point \ref{S2_1_2} from Subsection \ref{S2_1}, we can state the
strong convergence in $L^{p}$, at least for a subsequence of $\left(
u_{j}\right)  _{j},$ to some function $u^{\ast}\in u_{0}+X_{0}.$ Let $u$ be
the state associated to $g.$ We pose wether the identity $u=u^{\ast}$ is true
or not:

\begin{theorem}
[$G$-convergence]\label{G-convergence}Under the above circumstances%
\[
\lim_{j\rightarrow\infty}\min_{w\in u_{0}+X_{0}}\left\{  \frac{1}{p}%
B_{h}\left(  w,w\right)  -\int_{\Omega}g_{j}\left(  x\right)  w\left(
x\right)  dx\right\}  =\min_{w\in u_{0}+X_{0}}\left\{  \frac{1}{p}B_{h}\left(
w,w\right)  -\int_{\Omega}g\left(  x\right)  w\left(  x\right)  dx\right\}
\]
and $u=u^{\ast}.$
\end{theorem}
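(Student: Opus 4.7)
The plan is to prove a two-sided inequality between the limit of the minima and the minimum of the limit functional, and then exploit the uniqueness of the minimizer of $\mathcal{J}(w)=\tfrac{1}{p}B_h(w,w)-\int_\Omega g\,w\,dx$ to identify $u^*$ with $u$. Write $\mathcal{J}_j(w)=\tfrac{1}{p}B_h(w,w)-\int_\Omega g_j w\,dx$, and let $m_j=\min_{u_0+X_0}\mathcal{J}_j=\mathcal{J}_j(u_j)$ and $m=\min_{u_0+X_0}\mathcal{J}=\mathcal{J}(u)$ (both minima exist by the Lemma proved in Section 2).

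For the $\limsup$ inequality, fix any test function $w\in u_0+X_0$. Since $g_j\rightharpoonup g$ weakly in $L^{p^{\prime}}(\Omega)$ and $w\in L^p(\Omega)$, we have $\int_\Omega g_j w\,dx\to\int_\Omega g\,w\,dx$, so $\mathcal{J}_j(w)\to\mathcal{J}(w)$. By minimality $m_j\leq\mathcal{J}_j(w)$, hence $\limsup_j m_j\leq\mathcal{J}(w)$ for every $w\in u_0+X_0$, and therefore $\limsup_j m_j\leq m$.

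For the $\liminf$ inequality I would use the strong $L^p$-convergence $u_j\to u^*$ already obtained above the statement. Passing to a subsequence I may assume $u_j\to u^*$ a.e.\ in $\Omega_\delta$, so the nonnegative integrand defining $B_h(u_j,u_j)$ converges a.e.\ to the one defining $B_h(u^*,u^*)$. Fatou's lemma then yields
\begin{equation*}
\liminf_{j\to\infty} B_h(u_j,u_j)\geq B_h(u^*,u^*).
\end{equation*}
Combining this with the weak$\times$strong convergence $\int_\Omega g_j u_j\,dx\to\int_\Omega g\,u^*\,dx$ (because $g_j\rightharpoonup g$ in $L^{p^\prime}$ and $u_j\to u^*$ strongly in $L^p$) gives
\begin{equation*}
\liminf_{j\to\infty} m_j=\liminf_{j\to\infty}\mathcal{J}_j(u_j)\geq \tfrac{1}{p}B_h(u^*,u^*)-\int_\Omega g\,u^*\,dx=\mathcal{J}(u^*)\geq m.
\end{equation*}
Together with the upper bound this proves $m_j\to m$, and moreover $\mathcal{J}(u^*)=m$, so $u^*$ is a minimizer of $\mathcal{J}$. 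Strict convexity of $\mathcal{J}$ (used in the uniqueness argument of Theorem \ref{Th1}) forces $u^*=u$.

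The main technical obstacle is the lower semicontinuity $\liminf_j B_h(u_j,u_j)\geq B_h(u^*,u^*)$ under strong $L^p$-convergence of a sequence whose nonlocal energies are only known to be uniformly bounded; the Fatou argument works here because the integrand is nonnegative and depends pointwise on $u_j(x)-u_j(x')$, but any refinement (for instance, recovering the full $B_h$ under merely weak convergence in some Sobolev sense) would require a more delicate convexity/truncation argument. The remaining steps, namely the weak$\times$strong passage for $\int g_j u_j$ and the identification $u^*=u$ by strict convexity, are routine.
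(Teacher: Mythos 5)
Your proof is correct and follows essentially the same route as the paper: an upper bound via minimality and weak convergence of $g_j$, a lower bound via the weak--strong pairing $\int_\Omega g_j u_j\,dx\to\int_\Omega g\,u^{\ast}\,dx$ together with lower semicontinuity of $B_h$, and identification $u^{\ast}=u$ by uniqueness of the minimizer. The only difference is that you justify the lower semicontinuity step concretely by Fatou's lemma along an a.e.-convergent subsequence, where the paper simply invokes lower semicontinuity of $B_h$; your justification is sound since the integrand is nonnegative and depends pointwise on the differences $u_j(x')-u_j(x)$.
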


\begin{proof}
Assume $m_{j}$ and $m$ denote the minimum values from the left and right
respectively. We prove $\lim_{j}m_{j}\leq$ $m:$%
\begin{align*}
\lim_{j}m_{j} &  =\lim_{j}\left(  \frac{1}{p}B_{h}\left(  u_{j},u_{j}\right)
-\int_{\Omega}g_{j}\left(  x\right)  u_{j}\left(  x\right)  dx\right)  \\
&  \leq\lim_{j}\left(  \frac{1}{p}B_{h}\left(  u,u\right)  -\int_{\Omega}%
g_{j}\left(  x\right)  u\left(  x\right)  dx\right)  \\
&  =\frac{1}{p}B_{h}\left(  u,u\right)  -\int_{\Omega}g\left(  x\right)
u\left(  x\right)  dx\\
&  =\min_{w\in u_{0}+X_{0}}\left\{  \frac{1}{p}B_{h}\left(  w,w\right)
-\int_{\Omega}g\left(  x\right)  w\left(  x\right)  dx\right\}  .
\end{align*}
We check $\lim_{j}m_{j}\geq m:$ we know $u_{j}\rightarrow u^{\ast}$ strongly
in $L^{p},$ $g_{j}\rightharpoonup g$ weakly in $L^{p^{\prime}}$ and therefore
\[
\lim_{j}\int_{\Omega}g_{j}\left(  x\right)  u_{j}\left(  x\right)
dx=\int_{\Omega}g\left(  x\right)  u^{\ast}\left(  x\right)  dx.
\]
We apply these convergences to analyze the limit of the energy functional:
\begin{align*}
\lim_{j}m_{j} &  =\lim_{j}\left(  \frac{1}{p}B_{h}\left(  u_{j},u_{j}\right)
-\int_{\Omega}g_{j}\left(  x\right)  u_{j}\left(  x\right)  dx\right)  \\
&  =\frac{1}{p}\lim_{j}B_{h}\left(  u_{j},u_{j}\right)  -\int_{\Omega}g\left(
x\right)  u^{\ast}\left(  x\right)  dx\\
&  \geq\frac{1}{p}B_{h}\left(  u^{\ast},u^{\ast}\right)  -\int_{\Omega
}g\left(  x\right)  u^{\ast}\left(  x\right)  dx\\
&  \geq\frac{1}{p}B_{h}\left(  u,u\right)  -\int_{\Omega}g\left(  x\right)
u\left(  x\right)  dx\\
&  =\min_{w\in u_{0}+X_{0}}\left\{  \frac{1}{2}B_{h}\left(  w,w\right)
-\int_{\Omega}g\left(  x\right)  w\left(  x\right)  dx\right\}
\end{align*}
where the first inequality is due to the lower semicontinuity of the operator
$B_{h}\left(  \cdot,\cdot\right)  $ with respect to the weak convergence in
$L^{p}.$ We have proved $\lim_{j}m_{j}=$ $m.$ Also, from the above chain of
inequalities it is obvious to see that both $u$ and $u^{\ast}$ are solutions
to the problem (\ref{NL_Dirichlet}), then according to Theorem \ref{Th1}
$u=u^{\ast}.$
\end{proof}

\begin{corollary}
\label{remark_conver_norms}The following convergences hold%
\begin{equation}
\lim_{j\rightarrow\infty}B_{h}\left(  u_{j},u_{j}\right)  =B_{h}\left(
u,u\right)  ,\label{convergence_norms}%
\end{equation}
and%
\begin{equation}
\lim_{j\rightarrow\infty}B_{h}\left(  u_{j}-u,u_{j}-u\right)
=0.\label{convergence_norms_main}%
\end{equation}

\end{corollary}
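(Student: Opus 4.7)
The first convergence (\ref{convergence_norms}) is an almost immediate byproduct of the proof of Theorem \ref{G-convergence}. Indeed, from the very definition
\[
m_j = \frac{1}{p}\, B_h(u_j, u_j) - \int_\Omega g_j(x)\, u_j(x)\, dx,
\]
the already-established facts $\lim_j m_j = m$ and $\lim_j \int_\Omega g_j u_j\, dx = \int_\Omega g\, u\, dx$ force $\lim_j B_h(u_j, u_j)$ to exist and to equal
\[
p\Bigl(m + \int_\Omega g\, u\, dx\Bigr) = p\Bigl(\tfrac{1}{p} B_h(u,u) - \int_\Omega g\, u\, dx + \int_\Omega g\, u\, dx\Bigr) = B_h(u,u).
\]

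For (\ref{convergence_norms_main}), the plan is to transfer the problem to the product space $\Omega_\delta \times \Omega_\delta$, where a standard $L^p$ argument applies. I would introduce the weighted finite-difference maps
\[
\phi_j(x',x) \doteq \left(\frac{H(x',x)\, k_\delta(|x'-x|)}{|x'-x|^p}\right)^{1/p}\bigl(u_j(x') - u_j(x)\bigr),
\]
and define $\phi$ analogously with $u$ in place of $u_j$. By construction $B_h(u_j,u_j) = \|\phi_j\|_{L^p(\Omega_\delta\times\Omega_\delta)}^p$, $B_h(u,u) = \|\phi\|_{L^p(\Omega_\delta\times\Omega_\delta)}^p$, and, crucially, $B_h(u_j - u,\, u_j - u) = \|\phi_j - \phi\|_{L^p(\Omega_\delta\times\Omega_\delta)}^p$.

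Since $u_j \to u$ strongly in $L^p(\Omega_\delta)$, after extracting a subsequence we may assume $u_j \to u$ pointwise almost everywhere in $\Omega_\delta$, and hence $\phi_j \to \phi$ pointwise almost everywhere on the product space. Combined with the norm convergence $\|\phi_j\|_{L^p} \to \|\phi\|_{L^p}$ just obtained from (\ref{convergence_norms}), the Brezis--Lieb lemma yields $\|\phi_j - \phi\|_{L^p} \to 0$, which is exactly (\ref{convergence_norms_main}) along the extracted subsequence. The usual ``every subsequence has a further subsequence'' argument upgrades this to the full sequence. I do not anticipate any serious obstacle; the only subtle point is that $B_h(\cdot,\cdot)$ is not bilinear when $p \neq 2$, so one cannot simply expand $B_h(u_j - u, u_j - u)$ into cross terms. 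This is precisely why one passes to the auxiliary $\phi_j$ and invokes Brezis--Lieb (equivalently, one could appeal to the uniform convexity of $L^p$ for $1 < p < \infty$).
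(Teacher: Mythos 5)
Your proposal is correct and follows essentially the same route as the paper: the first limit is read off from the chain of inequalities in the proof of Theorem \ref{G-convergence}, and the second is obtained by passing to the weighted finite-difference functions on $\Omega_{\delta}\times\Omega_{\delta}$ (your $\phi_{j}$ are exactly the paper's $\Psi_{j}$) and combining pointwise a.e.\ convergence with convergence of the $L^{p}$ norms. The only cosmetic difference is the named tool --- you invoke the Brezis--Lieb lemma where the paper cites the classical Riesz result that a.e.\ convergence plus norm convergence implies strong $L^{p}$ convergence --- which is the same argument in substance.
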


\begin{proof}
(\ref{convergence_norms}) follows from the proof of the above theorem and can
be rewritten as this convergence of norms:%
\begin{align*}
&  \lim_{j\rightarrow\infty}\int_{\Omega_{\delta}}\int_{\Omega_{\delta}%
}H\left(  x^{\prime},x\right)  k_{\delta}\left(  \left\vert x^{\prime
}-x\right\vert \right)  \frac{\left\vert u_{j}\left(  x^{\prime}\right)
-u_{j}\left(  x\right)  \right\vert ^{p}}{\left\vert x^{\prime}-x\right\vert
^{p}}dx^{\prime}dx\\
&  =\int_{\Omega_{\delta}}\int_{\Omega_{\delta}}H\left(  x^{\prime},x\right)
k_{\delta}\left(  \left\vert x^{\prime}-x\right\vert \right)  \frac{\left\vert
u\left(  x^{\prime}\right)  -u\left(  x\right)  \right\vert ^{p}}{\left\vert
x^{\prime}-x\right\vert ^{p}}dx^{\prime}dx.
\end{align*}
But this convergence is equivalent to say that the norm of the sequence
\[
\Psi_{j}\left(  x^{\prime},x\right)  =H^{1/p}\left(  x^{\prime},x\right)
k_{\delta}^{1/p}\left(  \left\vert x^{\prime}-x\right\vert \right)
\frac{\left(  u_{j}\left(  x^{\prime}\right)  -u_{j}\left(  x\right)  \right)
}{\left\vert x^{\prime}-x\right\vert }%
\]
converges to the norm of the function%
\[
\Psi\left(  x^{\prime},x\right)  =H^{1/p}\left(  x^{\prime},x\right)
k_{\delta}^{1/p}\left(  \left\vert x^{\prime}-x\right\vert \right)
\frac{\left(  u\left(  x^{\prime}\right)  -u\left(  x\right)  \right)
}{\left\vert x^{\prime}-x\right\vert }.
\]
Since, additionally, up to a subsequence, $\left(  \Psi_{j}\right)  _{j}$
converges pointwise a.e. $\left(  x^{\prime},x\right)  \in\Omega_{\delta
}\times\Omega_{\delta}$ to $\Psi$, then $\Psi_{j}$ strongly converges to
$\Psi\left(  x^{\prime},x\right)  $ in $L^{p}\left(  \Omega_{\delta}%
\times\Omega_{\delta}\right)  $ (see \cite[Pag. 78]{Riesz}) and
(\ref{convergence_norms}) has been proved.
\end{proof}

\begin{remark}
\label{convergenceinnorm}The convergence (\ref{convergence_norms}), together
with the strong convergence of $\left(  u_{j}\right)  _{j},$ is precisely
equivalent to the strong convergence in $X$. In particular,
\begin{equation}
\lim_{j\rightarrow\infty}B\left(  u_{j}-u,u_{j}-u\right)
=0\label{convergence_norms_2}%
\end{equation}
and
\begin{equation}
\lim_{j\rightarrow\infty}B\left(  u_{j},u_{j}\right)  =B\left(  u,u\right)
.\label{convergence_norms_3}%
\end{equation}
We also realize that for any $h_{0}\in\mathcal{H}$ we have
\begin{align*}
\lim_{j\rightarrow\infty}B_{h}\left(  u_{j}-u,u_{j}-u\right)   &
=\lim_{j\rightarrow\infty}B_{\frac{h}{h_{0}}h_{0}}\left(  u_{j}-u,u_{j}%
-u\right) \\
& \geq\frac{h_{\min}}{h_{\max}}\lim_{j\rightarrow\infty}B_{h_{0}}\left(
u_{j}-u,u_{j}-u\right)
\end{align*}
Consequently, from (\ref{convergence_norms_main}) we deduce $\lim
_{j\rightarrow\infty}B_{h_{0}}\left(  u_{j}-u,u_{j}-u\right)  =0,$ for any
$h_{0}\in\mathcal{H}$ and thereby
\begin{equation}
\lim_{j\rightarrow\infty}B_{h_{0}}\left(  u_{j},u_{j}\right)  =\lim
_{j\rightarrow\infty}B_{h_{0}}\left(  u,u\right)
.\label{convergence_norms_3-h0}%
\end{equation}

\end{remark}

The convergences of the states we have just described above, are still valid
if we consider a sequence of sources $\left(  g_{j}\right)  _{j}$, uniformly
bounded in the dual space $X_{0}^{\prime}.$ Since $X_{0}$ is reflexive,
$X_{0}^{\prime}$ too, and we can ensure the sequence $\left(  g_{j}\right)
_{j}$ is weakly convergent, up to a subsequence, to an operator $g\in
X_{0}^{\prime}.$ Let $\left(  u_{j}\right)  _{j}$ and $u$ the underlying
states of $\left(  g_{j}\right)  _{j}$ and $g$ respectively. Then, thanks to
the precedent analysis, we know the sequence $\left(  u_{j}\right)  _{j},$ the
states associated to the controls $\left(  g_{j}\right)  _{j},$ converges
weakly to $u$ in $L^{p}\supset X,$ where $u$ is the stated associated to $g.$
Take now any element $L\in X_{0}^{\prime}.$ Under these circumstances there
exists a function $u_{L}\in u_{0}+X_{0}$ such that $B_{h}\left(
u_{L},w\right)  =\left\langle L,w\right\rangle _{X^{\prime}\times X}$ for any
$w\in X_{0}.$ Then, we easily deduce%
\[
\lim_{j\rightarrow\infty}\left\langle L,u_{j}-u_{0}\right\rangle
_{X_{0}^{\prime}\times X_{0}}=\left\langle L,u-u_{0}\right\rangle
_{X_{0}^{\prime}\times X_{0}}%
\]
The explanation of that relies on the strong convergence achieved in the above
corollary: indeed, H\"{o}lder's inequality and (\ref{convergence_norms_2})
straightforwardly provide
\begin{align*}
& \lim_{j\rightarrow\infty}\left\vert \left\langle L,u_{j}-u_{0}\right\rangle
_{X_{0}^{\prime}\times X_{0}}-\left\langle L,u-u_{0}\right\rangle
_{X_{0}^{\prime}\times X_{0}}\right\vert \\
& =\lim_{j\rightarrow\infty}\left\vert B_{h}\left(  u_{L},u_{j}-u\right)
\right\vert \\
& \leq\lim_{j\rightarrow\infty}B_{h}^{1/p^{\prime}}\left(  u_{L},u_{L}\right)
B_{h}^{1/p}\left(  u_{j}-u,u_{j}-u\right)  \\
& =0.
\end{align*}

The analysis performed explicitly confirm the fact that the sequence $\left(
u_{j}\right)  _{j}$ is weakly convergent to $u$ in $X_{0},$ or in other words,
the sequence of problems
\[
\min_{w\in u_{0}+X_{0}}\left\{  \frac{1}{p}B_{h}\left(  w,w\right)
-\int_{\Omega}g_{j}\left(  x\right)  w\left(  x\right)  dx\right\}
\]
$G$-converges to the problem
\[
\min_{w\in u_{0}+X_{0}}\left\{  \frac{1}{p}B_{h}\left(  w,w\right)
-\int_{\Omega}g\left(  x\right)  w\left(  x\right)  dx\right\}
\]
(see the abstract energy criterion established in \cite[Chapter 5, p.
162]{Jikov}).

\begin{theorem}
[Well posedness]\label{Well-posedness}There exists a solution $\left(
g,u\right)  $ to the control problem $\left(  \mathcal{P}^{\delta}\right)  $
given at (\ref{NLCP}).
\end{theorem}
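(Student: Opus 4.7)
The plan is to apply the direct method of the calculus of variations, using the $G$-convergence result (Theorem \ref{G-convergence}) and its corollary as the main continuity tool. Since $G\geq 0$, $\beta,\gamma>0$ and $B_{h_{0}}(u,u)\geq 0$, the functional $I_{\delta}$ is bounded below by $0$, so the infimum $m_{\delta}=\inf_{(g,u)\in\mathcal{A}^{\delta}} I_{\delta}(g,u)$ is finite and I may select a minimizing sequence $(g_{j},u_{j})\in\mathcal{A}^{\delta}$ with $I_{\delta}(g_{j},u_{j})\to m_{\delta}$.

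First I would extract compactness from the source penalty. The term $\beta\int_{\Omega}|g_{j}|^{p^{\prime}}\,dx$ forces $(g_{j})_{j}$ to be uniformly bounded in $L^{p^{\prime}}(\Omega)$; by reflexivity, up to a subsequence $g_{j}\rightharpoonup g$ weakly in $L^{p^{\prime}}(\Omega)$. Theorem \ref{G-convergence} applied to this subsequence then yields that the associated states $u_{j}$ converge strongly in $L^{p}(\Omega_{\delta})$ to the unique state $u$ associated with $g$ via (\ref{b}); in particular, $(g,u)\in\mathcal{A}^{\delta}$ is admissible. Moreover, by Corollary \ref{remark_conver_norms} and Remark \ref{convergenceinnorm}, one gets the full norm convergence $B_{h_{0}}(u_{j},u_{j})\to B_{h_{0}}(u,u)$.

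Next I would pass to the limit in each of the three pieces of $I_{\delta}$. The uniform Lipschitz assumption on $G(x,\cdot)$ combined with the strong $L^{p}(\Omega)$-convergence of $u_{j}\to u$ and H\"older's inequality implies $\int_{\Omega}G(x,u_{j})\,dx\to\int_{\Omega}G(x,u)\,dx$. The convex functional $f\mapsto\beta\int_{\Omega}|f|^{p^{\prime}}\,dx$ is weakly lower semicontinuous in $L^{p^{\prime}}(\Omega)$, so $\beta\int_{\Omega}|g|^{p^{\prime}}\,dx\leq\liminf_{j}\beta\int_{\Omega}|g_{j}|^{p^{\prime}}\,dx$. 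Combining these two facts with the convergence $\gamma B_{h_{0}}(u_{j},u_{j})\to\gamma B_{h_{0}}(u,u)$ gives $I_{\delta}(g,u)\leq\liminf_{j}I_{\delta}(g_{j},u_{j})=m_{\delta}$, hence $(g,u)$ attains the infimum.

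The only nontrivial point is the handling of the nonlocal energy term $\gamma B_{h_{0}}(u,u)$: under weak convergence of $u_{j}$ one would only obtain lower semicontinuity of this quadratic-type form, and this would still suffice to conclude, but it is worth emphasising that Remark \ref{convergenceinnorm} actually upgrades the $G$-convergence to strong convergence in $X$, so no energy is lost in the limit. Everything else in the argument is a standard application of the direct method, and the hypotheses on the cost were tailored precisely to make these three limit passages work simultaneously.
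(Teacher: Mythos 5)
Your proposal is correct and follows essentially the same route as the paper: a minimizing sequence, weak $L^{p^{\prime}}$ compactness of the controls from the $\beta$-penalty, Theorem \ref{G-convergence} for admissibility of the limit pair and strong convergence of the states, continuity of the $G$-term via the Lipschitz hypothesis, weak lower semicontinuity of the $|g|^{p^{\prime}}$ term, and the norm convergence \eqref{convergence_norms_3-h0} for the $B_{h_{0}}$ term. If anything, your explicit appeal to weak lower semicontinuity of the convex source penalty is a cleaner justification than the paper's invocation of Fatou's Lemma at that step.
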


\begin{proof}
Let $\left(  g_{j},u_{j}\right)  $ be a minimizing sequence. Then, up to
subsequence, we know that $g_{j}\rightharpoonup g$ weakly in $L^{p^{\prime}}$
and $u_{j}\rightarrow u$ strongly in $L^{p}.$ In addition, by Theorem
\ref{G-convergence} the couple $\left(  g,u\right)  $ is admissible for the
control problem, that is $\left(  g,u\right)  \in\mathcal{A}^{\delta}$.
Factually, this couple is a minimizer of the problem. To check that we observe
the infimum $i$ of the minimization principle can be computed as%
\[
i\doteq\lim_{j}I\left(  g_{j},u_{j}\right)  =\lim_{j\rightarrow\infty}%
\int_{\Omega}F\left(  x,u_{j}\left(  x\right)  ,g_{j}\left(  x\right)
\right)  dx.
\]
If we take into account the properties of $G$ is straightforward to verify the
that%
\begin{equation}
\lim_{j\rightarrow\infty}\int_{\Omega}G\left(  x,u_{j}\left(  x\right)
\right)  dx=\int_{\Omega}G\left(  x,u\left(  x\right)  \right)
dx\label{conver_G}%
\end{equation}
In addition, by using the Fatou's Lemma and the convergence
(\ref{convergence_norms_3-h0}) it is automatic to check that%
\begin{align*}
i &  \geq\text{ }\underset{j\rightarrow\infty}{\lim\inf}\int_{\Omega}F\left(
x,u_{j}\left(  x\right)  ,g_{j}\left(  x\right)  \right)  dx\\
&  \geq\int_{\Omega}F\left(  x,u\left(  x\right)  ,g\left(  x\right)  \right)
dx\\
&  =I\left(  g,u\right)  .
\end{align*}
The above inequality implies that $\left(  g,u\right)  $ is a minimizer.
\end{proof}

\begin{remark}
\label{unique}If $p=2$ and $G\left(  x,\cdot\right)  $ is convex, then the
solution of (\ref{NLCP}) is unique. The uniqueness is guaranteed because of to
the strict convexity of the function $t\rightarrow\left\vert t\right\vert
^{p^{\prime}}$ and the linearity of the state equation: if there are two
different solutions $\left(  g,u\right)  $ and $\left(  f,v\right)  ,$ then
the stated associated with the source $y_{s}\left(  x\right)  =sg\left(
x\right)  +\left(  1-s\right)  f\left(  x\right)  $ ($s\in\left(  0,1\right)
)$ is $u_{s}\left(  x\right)  =su\left(  x\right)  +\left(  1-s\right)
v\left(  x\right)  .$ If we apply the above properties of convexity, and the
one of the operator $B_{h_{0}}$ as well, then we arrive at
\begin{align*}
J_{\delta}\left(  y_{s},u_{s}\right)   & =\int_{\Omega}F\left(  x,u_{s}\left(
x\right)  ,y_{s}\left(  x\right)  \right)  dx=\int_{\Omega}\left(  G\left(
x,u_{s}\left(  x\right)  \right)  +\beta\left\vert y_{s}\left(  x\right)
\right\vert ^{p^{\prime}}\right)  dx+\gamma B_{h_{0}}\left(  u_{s}%
,u_{s}\right) \\
& <s\int_{\Omega}\left(  G\left(  x,u\left(  x\right)  \right)  +\beta
\left\vert g\left(  x\right)  \right\vert ^{p^{\prime}}\right)  dx+\left(
1-s\right)  \int_{\Omega}\left(  G\left(  x,v\left(  x\right)  \right)
+\beta\left\vert f\left(  x\right)  \right\vert ^{p^{\prime}}\right)  dx\\
& +\gamma sB_{h_{0}}\left(  u,u\right)  +\gamma\left(  1-s\right)  B_{h_{0}%
}\left(  v,v\right) \\
& =sJ_{\delta}\left(  g,u\right)  +\left(  1-s\right)  J_{\delta}\left(
f,v\right)  ,
\end{align*}
which is contradictory because both $\left(  g,u\right)  $ and $\left(
f,v\right)  $ are minimizers of $J_{\delta}.$
\end{remark}

\section{Convergence of the state equation if $\delta\rightarrow0$\label{S4}}

Assume the source $g$ and $u_{0}\in W^{1-1/p,p}\left(  \partial\Omega\right)
$ are fixed functions. If, for each $\delta,$ we consider the corresponding
sequence of states $\left(  u_{\delta}\right)  _{\delta}\subset u_{0}%
+W_{0}^{1,p}\left(  \Omega\right)  ,$ then%
\[
B_{h}\left(  u_{\delta},v\right)  =\int g\left(  x\right)  v\left(  x\right)
dx
\]
for any $v\in X_{0}.$ Consequently, as in the previous sections, we easily
prove $\left\Vert u_{\delta}\right\Vert _{L^{p}\left(  \Omega\right)  }$ and
$B_{h}\left(  u_{\delta},u_{\delta}\right)  $ are sequences uniformly bounded
in $\delta.$ Then, by using part \ref{S2_1_3} from Subsection \ref{S2_1},
these estimations imply the existence of a function $u^{\ast}\in u_{0}%
+W_{0}^{1,p}\left(  \Omega\right)  $ and a subsequence of $u_{\delta}$ (still
denoted $u_{\delta})$, $\ $such that $u_{\delta}\rightarrow u^{\ast}$ strongly
in $L^{p}\left(  \Omega\right)  $. Now, we look for the state equation that
should be satisfied by the pair $\left(  g,u^{\ast}\right)  $. The answer to
this question is given in the following convergence result:

\begin{theorem}
\label{Th2}%
\begin{equation}%
\begin{tabular}
[c]{l}%
$\displaystyle\lim_{\delta\rightarrow0}\min_{w\in u_{0}+X_{0}}\left\{
\frac{1}{p}B_{h}\left(  w,w\right)  -\int g\left(  x\right)  w\left(
x\right)  dx\right\}  \smallskip$\\
$\displaystyle=\min_{w\in u_{0}+W_{0}^{1,p}\left(  \Omega\right)  }\left\{
\frac{1}{p}\int_{\Omega}h\left(  x\right)  \left\vert \nabla w\left(
x\right)  \right\vert ^{p}dx-\int g\left(  x\right)  w\left(  x\right)
dx\right\}  $%
\end{tabular}
\ \ \ \ \ \label{7}%
\end{equation}
and $\left(  g,u^{\ast}\right)  \in\mathcal{A}^{loc}.$
\end{theorem}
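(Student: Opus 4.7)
The plan is to run the same \emph{liminf/limsup} scheme used in Theorem \ref{G-convergence}, now with the asymptotic parameter being $\delta \to 0$ rather than the source index $j$, and then read off $u^{\ast}$ as the unique minimizer of the limiting Dirichlet principle. Denote by $m_\delta$ and $m_{\mathrm{loc}}$ the nonlocal and local minimum values in \eqref{7}, so that $m_\delta = \mathcal{J}_\delta(u_\delta)$, and identify $u_0$ with a fixed extension $V_0 \in W^{1,p}(\Omega_\delta)$ whose trace on $\partial\Omega$ is $u_0$ (and, extending once more, whose definition on $\Omega_\delta\setminus\Omega$ also plays the role of nonlocal boundary datum).

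For the \emph{liminf} bound I will invoke item \ref{S2_1_3} of Subsection \ref{S2_1}: the uniform estimate on $B_h(u_\delta,u_\delta)$ already exhibited in the paragraphs preceding the theorem yields strong $L^p(\Omega)$-convergence $u_\delta \to u^{\ast}$ with $u^{\ast}\in u_0+W_0^{1,p}(\Omega)$, together with \eqref{Prel4}
\[
\liminf_{\delta\to 0} B_h(u_\delta,u_\delta) \;\geq\; \int_\Omega h(x)|\nabla u^{\ast}(x)|^p\,dx.
\]
Since $g$ is fixed in $L^{p'}(\Omega)$, the linear term passes to the limit: $\int_\Omega g\,u_\delta \to \int_\Omega g\,u^{\ast}$. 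Combining these,
\[
\liminf_{\delta\to 0} m_\delta \;\geq\; \tfrac{1}{p}\int_\Omega h|\nabla u^{\ast}|^p\,dx - \int_\Omega g\,u^{\ast}\,dx \;\geq\; m_{\mathrm{loc}}.
\]

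For the \emph{limsup} bound I pick any competitor $w \in u_0+W_0^{1,p}(\Omega)$ and use it directly in the nonlocal problem: writing $w = V_0 + (w-V_0)$ with $w-V_0 \in W_0^{1,p}(\Omega)$ extended by zero outside $\Omega$, one checks $w \in u_0 + X_0(\delta)$ for every $\delta$. Then \eqref{Prel5} gives $B_h(w,w) \to \int_\Omega h|\nabla w|^p\,dx$, so
\[
\limsup_{\delta\to 0} m_\delta \;\leq\; \lim_{\delta\to 0} \mathcal{J}_\delta(w) \;=\; \tfrac{1}{p}\int_\Omega h|\nabla w|^p\,dx - \int_\Omega g\,w\,dx,
\]
and taking the infimum over $w$ produces $\limsup_{\delta\to 0} m_\delta \leq m_{\mathrm{loc}}$. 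The two bounds squeeze $\lim_\delta m_\delta = m_{\mathrm{loc}}$, and the liminf chain collapses to equalities, showing that $u^{\ast}$ attains $m_{\mathrm{loc}}$. The local analogue of the Dirichlet-principle/Euler--Lagrange equivalence (the exact transcription of the two lemmas of Subsection \ref{S2} to the local setting, together with the uniqueness argument based on \eqref{ele_ine}) then identifies $u^{\ast}$ as the unique weak solution of $(P^{\mathrm{loc}})$, i.e.\ $(g,u^{\ast})\in \mathcal{A}^{\mathrm{loc}}$.

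The principal technical point I expect is the limsup construction: one must ensure that a local test function $w$ can be legitimately employed in the nonlocal variational problem with the correct nonlocal boundary prescription on $\Omega_\delta\setminus\Omega$. Fixing the $W^{1,p}(\Omega_\delta)$-extension $V_0$ of $u_0$ at the outset circumvents this, since then the recovery sequence can be taken to be $w$ itself (independent of $\delta$) and the boundary identity $w=u_0$ on $\Omega_\delta\setminus\Omega$ holds by construction; the application of \eqref{Prel5} to the fixed function $w$ is then completely standard.
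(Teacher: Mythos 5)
Your proposal is correct and follows essentially the same route as the paper: the lower bound via the compactness of item \ref{S2_1_3} and the Ponce-type inequality (\ref{Prel4}), the upper bound by testing the nonlocal Dirichlet principle with a fixed local competitor and invoking the pointwise convergence of energies (\ref{Prel5}), and the identification $(g,u^{\ast})\in\mathcal{A}^{loc}$ from the collapsed chain of inequalities plus uniqueness for the local problem. The only cosmetic difference is that you take the infimum over arbitrary competitors $w$ in the limsup step (and flag the admissibility of local test functions in the nonlocal problem, which the paper leaves implicit), whereas the paper tests directly with the local minimizer $u$; the substance is identical.
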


\begin{proof}
We define the local state $u$ that corresponds to the source $g$ by means of
the local problem%
\[
\min_{w\in u_{0}+W_{0}^{1,p}\left(  \Omega\right)  }\left\{  \frac{1}{p}%
b_{h}\left(  w,w\right)  -\int g\left(  x\right)  w\left(  x\right)
dx\right\}
\]
If $u_{\delta}$ is the solution to the nonlocal state equation, then
$u_{\delta}$ solves%
\[
\min_{w\in u_{0}+X_{0}}\left\{  \frac{1}{p}B_{h}\left(  w,w\right)  -\int
g\left(  x\right)  w\left(  x\right)  dx\right\}
\]
and from $\left(  u_{\delta}\right)  _{\delta}$ we can extract a subsequence
strongly convergent to $u^{\ast}\in u_{0}+W_{0}^{1,p}\left(  \Omega\right)  $
in $L^{p}.$ By means of
\[
\lim_{\delta\rightarrow0}B_{h}\left(  u_{\delta},u_{\delta}\right)  \geq
\int_{\Omega}h\left(  x\right)  \left\vert \nabla u^{\ast}\left(  x\right)
\right\vert ^{2}dx,
\]
(see (\ref{Prel4})) we are allowed to write%
\begin{align*}
&  \lim_{\delta\rightarrow0}\min_{w\in u_{0}+X_{0}}\left\{  \frac{1}{p}%
B_{h}\left(  w,w\right)  -\int g\left(  x\right)  w\left(  x\right)
dx\right\}  \\
&  =\lim_{\delta\rightarrow0}\left(  \frac{1}{p}B_{h}\left(  u_{\delta
},u_{\delta}\right)  -\int g\left(  x\right)  u_{\delta}\left(  x\right)
dx\right)  \\
&  \geq\frac{1}{p}\int_{\Omega}h\left(  x\right)  \left\vert \nabla u^{\ast
}\left(  x\right)  \right\vert ^{p}dx-\int g\left(  x\right)  u^{\ast}\left(
x\right)  dx\\
&  \geq\frac{1}{p}\int_{\Omega}h\left(  x\right)  \left\vert \nabla u\left(
x\right)  \right\vert ^{p}dx-\int_{\Omega}g\left(  x\right)  u\left(
x\right)  dx\\
&  =\min_{w\in u_{0}+H_{0}^{1}\left(  \Omega\right)  }\left\{  \frac{1}{p}%
\int_{\Omega}h\left(  x\right)  \left\vert \nabla w\left(  x\right)
\right\vert ^{p}dx-\int g\left(  x\right)  w\left(  x\right)  dx\right\}  .
\end{align*}
We prove the reverse inequality: it suffices the usage of the convergence
given at (\ref{Prel5}) to realize that
\begin{align*}
&  \lim_{\delta\rightarrow0}\min_{w\in u_{0}+X_{0}}\left\{  \frac{1}{p}%
B_{h}\left(  w,w\right)  -\int g\left(  x\right)  w\left(  x\right)
dx\right\}  \\
&  =\lim_{\delta\rightarrow0}\left(  \frac{1}{p}B_{h}\left(  u_{\delta
},u_{\delta}\right)  -\int g\left(  x\right)  u_{\delta}\left(  x\right)
dx\right)  \\
&  \leq\lim_{\delta\rightarrow0}\left(  \frac{1}{p}B_{h}\left(  u,u\right)
-\int g\left(  x\right)  u\left(  x\right)  dx\right)  \\
&  =\frac{1}{p}b_{h}\left(  u,u\right)  -\int g\left(  x\right)  u\left(
x\right)  dx\\
&  =\min_{w\in u_{0}+H_{0}^{1}\left(  \Omega\right)  }\left\{  \frac{1}{p}%
\int_{\Omega}h\left(  x\right)  \left\vert \nabla w\left(  x\right)
\right\vert ^{p}dx-\int g\left(  x\right)  w\left(  x\right)  dx\right\}  .
\end{align*}
The above two estimations amount to state two consequences: on the one side,
these estimations clearly give the convergence result (\ref{7}). On the other
side, from the above discussion, it can be read that both $u$ and $u^{\ast}$
are solutions to the classical boundary problem (\ref{d}), and thus, by the
uniqueness proved for this problem, we deduce $u=u^{\ast}.$
\end{proof}

The proof we have just done provides the convergence of energies%
\begin{equation}
\lim_{\delta\rightarrow0}B_{h}\left(  u_{\delta},u_{\delta}\right)
=b_{h}\left(  u,u\right)  .\label{strong_conv_p2}%
\end{equation}
If we use (\ref{Prel5}) the above limit can be rewritten as follows:%
\begin{equation}
\lim_{\delta\rightarrow0}\int_{\Omega_{\delta}}\int_{\Omega_{\delta}}H\left(
x^{\prime},x\right)  k_{\delta}\left(  \left\vert x^{\prime}-x\right\vert
\right)  \left(  \frac{\left\vert u_{\delta}\left(  x^{\prime}\right)
-u_{\delta}\left(  x\right)  \right\vert ^{p}}{\left\vert x^{\prime
}-x\right\vert ^{p}}-\frac{\left\vert u\left(  x^{\prime}\right)  -u\left(
x\right)  \right\vert ^{p}}{\left\vert x^{\prime}-x\right\vert ^{p}}\right)
dx^{\prime}dx=0.\label{strong_conv_p2_bis}%
\end{equation}
Moreover, for the particular case $p=2$ we have strong convergence in $X_{0}:
$
\begin{equation}
\lim_{\delta\rightarrow0}B\left(  u_{\delta}-u,u_{\delta}-u\right)
=0.\label{linear-case}%
\end{equation}
(the proof is automatic). Furthermore, if $p=2$ \ and $h_{0}$ is any function
from $\mathcal{H},$ then
\[
\lim_{\delta\rightarrow0}B_{h_{0}}\left(  u_{\delta}-u,u_{\delta}-u\right)  =0
\]
and%
\begin{equation}
\lim_{\delta\rightarrow0}B_{h_{0}}\left(  u_{\delta},u_{\delta}\right)
=B_{h_{0}}\left(  u,u\right)  .\label{linear-case0}%
\end{equation}

\section{Approximation to the optimal control problem\label{S5}}

We know that, for each $\delta,$ there exists at least a solution $\left(
g_{\delta},u_{\delta}\right)  $ to the problem (\ref{NLCP}). Our purpose is
the asymptotic analysis of this sequence of solutions. We shall prove that the
limit in $\delta$ of the sequence $\left(  g_{\delta},u_{\delta}\right)  $,
the pair $\left(  g,u\right)  $ derived at the previous section, solves the
corresponding local optimal control problem $\left(  \mathcal{P}^{loc}\right)
$ defined in (\ref{e}). The tools we use are nothing more than those used in
Theorem \ref{Th2}.

\begin{theorem}
\label{Th3}Let $\left(  g_{\delta},u_{\delta}\right)  $ be the sequence of
solutions to the control problem (\ref{NLCP}) with $\gamma=0$. Then there
exists a pair $\left(  g,u\right)  \in L^{p^{\prime}}\left(  \Omega\right)
\times\left(  u_{0}+W_{0}^{1,p}\left(  \Omega\right)  \right)  $ and a
subsequence of indexes $\delta$ for which the following conditions hold:

\begin{enumerate}
\item $g_{\delta}\rightharpoonup g$ weakly in $L^{p^{\prime}}\left(
\Omega\right)  ,$ $u_{\delta}\rightarrow u$ strongly in $L^{p}\left(
\Omega\right)  $ as $\delta\rightarrow0.$

\item
\begin{equation}%
\begin{tabular}
[c]{l}%
$\displaystyle\lim_{\delta\rightarrow0}\min_{w\in u_{0}+X_{0}}\left\{
\frac{1}{p}B_{h}\left(  w,w\right)  -\int g_{\delta}\left(  x\right)  w\left(
x\right)  dx\right\}  \medskip$\\
$\displaystyle=\min_{w\in u_{0}+W_{0}^{1,p}\left(  \Omega\right)  }\left\{
\frac{1}{p}b_{h}\left(  w,w\right)  -\int g\left(  x\right)  w\left(
x\right)  dx\right\}  $%
\end{tabular}
\label{G-convergence-energy}%
\end{equation}
and $\left(  g,u\right)  \in\mathcal{A}^{loc}.$

\item $\left(  g,u\right)  $ is a solution to the local control problem
(\ref{e}).
\end{enumerate}
\end{theorem}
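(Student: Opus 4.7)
The plan is to follow exactly the three numbered claims. I first extract the limit pair $(g,u)$, then identify it as an element of $\mathcal{A}^{loc}$, and finally prove minimality by the Direct Method applied across the parameter $\delta$.

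\textbf{Step 1 (a priori bounds and extraction).} The starting point is a uniform estimate on the cost $I_\delta(g_\delta,u_\delta)$. To obtain it I fix any convenient admissible competitor for $(\mathcal{P}^\delta)$, for instance the constant source $g\equiv 0$ together with its nonlocal state $u_\delta^0$; optimality of $(g_\delta,u_\delta)$ then yields $I_\delta(g_\delta,u_\delta)\le I_\delta(0,u_\delta^0)$, and Theorem \ref{Th2} applied to the trivial source shows the right-hand side stays bounded as $\delta\to 0$. Since $\gamma=0$, the term $\beta\int_\Omega|g_\delta|^{p'}$ is controlled, so, up to subsequences, $g_\delta\rightharpoonup g$ weakly in $L^{p'}(\Omega)$. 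Plugging $u_\delta-u_0$ into the state equation and invoking H\"older and Young as in the argument preceding Theorem \ref{G-convergence}, I then deduce the uniform bound $B_h(u_\delta,u_\delta)\le C$. By item \ref{S2_1_3} of Subsection \ref{S2_1}, there exists $u^\ast\in u_0+W_0^{1,p}(\Omega)$ such that, along a further subsequence, $u_\delta\to u^\ast$ strongly in $L^p(\Omega)$, which gives assertion (1).

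\textbf{Step 2 (identification: $(g,u^\ast)\in\mathcal{A}^{loc}$).} Let $u\in u_0+W_0^{1,p}(\Omega)$ denote the unique local state driven by $g$. I bracket the nonlocal minimum values
\[
m_\delta=\frac{1}{p}B_h(u_\delta,u_\delta)-\int_\Omega g_\delta u_\delta\,dx
\]
between two quantities. For the upper bound, I use $u$ itself as a competitor in the nonlocal minimization (it belongs to $u_0+X_0$ for every $\delta$ small enough since $u\in W^{1,p}(\Omega)$), which gives $m_\delta\le \frac{1}{p}B_h(u,u)-\int_\Omega g_\delta u\,dx$; passing to the limit via (\ref{Prel5}) and the weak convergence of $g_\delta$ yields $\limsup_\delta m_\delta\le \frac{1}{p}b_h(u,u)-\int g u$. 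For the lower bound, the inequality (\ref{Prel4}) applied to $u_\delta\to u^\ast$, combined with the weak-strong product convergence $\int g_\delta u_\delta\to\int g u^\ast$, gives $\liminf_\delta m_\delta\ge \frac{1}{p}\int_\Omega h|\nabla u^\ast|^p-\int g u^\ast$. Chaining the two inequalities with the minimality of $u$ for the local Dirichlet functional forces all quantities to coincide, proving (\ref{G-convergence-energy}) and also that $u^\ast$ attains the local minimum; by the uniqueness proved via (\ref{ele_ine}) one has $u^\ast=u$, so $(g,u)\in\mathcal{A}^{loc}$, which is assertion (2).

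\textbf{Step 3 (minimality in the local problem).} Let $(f,v)\in\mathcal{A}^{loc}$ be an arbitrary competitor. To compare it with $(g_\delta,u_\delta)$ I need a corresponding nonlocal admissible pair: Theorem \ref{Th1} provides a unique nonlocal state $v_\delta$ driven by $f$, and Theorem \ref{Th2} guarantees $v_\delta\to v$ strongly in $L^p(\Omega)$. Because $\gamma=0$, $I_\delta$ coincides with $I$ on admissible pairs, so the inequality
\[
I(g_\delta,u_\delta)=I_\delta(g_\delta,u_\delta)\le I_\delta(f,v_\delta)=I(f,v_\delta)
\]
holds for every $\delta$. I then pass to the liminf: the term $\int G(x,u_\delta)\,dx\to\int G(x,u)\,dx$ and $\int G(x,v_\delta)\,dx\to\int G(x,v)\,dx$ by Lipschitz continuity of $G(x,\cdot)$ and the strong $L^p$ convergences, while $\liminf_\delta\int_\Omega|g_\delta|^{p'}\ge\int_\Omega|g|^{p'}$ by weak lower semicontinuity of the $L^{p'}$ norm. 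This gives $I(g,u)\le I(f,v)$, establishing assertion (3).

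The main obstacle is Step 2: the upper bound on $m_\delta$ requires using the local minimizer $u$ as a genuine competitor inside the nonlocal variational problem with varying source $g_\delta$, and the lower bound combines the weak-limit inequality (\ref{Prel4}) with the weak-strong pairing $\langle g_\delta,u_\delta\rangle$; keeping these two passages compatible with the uniqueness statement for the local equation is the crucial computation. Once this identification is in place, Step 3 is a clean Direct Method argument thanks to $\gamma=0$, which makes $I_\delta=I$ on admissible pairs and avoids dealing with the nonlocal gradient term $B_{h_0}$.
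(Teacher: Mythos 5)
Your proposal is correct and follows essentially the same route as the paper: the same a priori bounds and compactness from Subsection \ref{S2_1}, the same two-sided bracketing of the nonlocal minimum values via (\ref{Prel4}) and (\ref{Prel5}) combined with uniqueness of the local state to identify $u^{\ast}=u$, and the same comparison with an arbitrary $(f,v)\in\mathcal{A}^{loc}$ through its nonlocal states $v_{\delta}$. Your only departures are cosmetic improvements: you make the uniform cost bound explicit by testing against the zero source, and you invoke weak lower semicontinuity of the $L^{p^{\prime}}$ norm for the $\beta\left\vert g_{\delta}\right\vert^{p^{\prime}}$ term, which is the cleaner tool where the paper cites Fatou's Lemma.
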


\begin{proof}
\begin{enumerate}
\item As in the previous analysis, it is clear that from any minimizing
sequence $\left(  g_{\delta},u_{\delta}\right)  $ we can extract a subsequence
of $\left(  g_{\delta}\right)  _{\delta}$ weakly convergent to $g$ in
$L^{p^{\prime}}$. Also, from the associated states $\left(  u_{\delta}\right)
_{\delta}$ we extract a subsequence that converges, strongly in $L^{p},$ to a
function $u^{\ast}\in u_{0}+W_{0}^{1,p}\left(  \Omega\right)  $. See
Subsection \ref{S2_1} part \ref{S2_1_3}.

\item We are going to see the state function $u^{\ast}$ is the one that
corresponds to the control $g:$\newline Let $u$ be the underlying state of
$g.$ Then, on the one side (\ref{Prel4}) allows us to write%
\begin{align*}
&  \lim_{\delta\rightarrow0}\min_{w\in u_{0}+X_{0}}\left\{  \frac{1}{p}%
B_{h}\left(  w,w\right)  -\int g_{\delta}\left(  x\right)  w\left(  x\right)
dx\right\} \\
&  =\lim_{\delta\rightarrow0}\left(  \frac{1}{p}B_{h}\left(  u_{\delta
},u_{\delta}\right)  -\int g_{\delta}\left(  x\right)  u_{\delta}\left(
x\right)  dx\right) \\
&  \geq\left(  \frac{1}{p}b_{h}\left(  u^{\ast},u^{\ast}\right)  -\int
g\left(  x\right)  u^{\ast}\left(  x\right)  dx\right) \\
&  \geq\min_{w\in u_{0}+W_{0}^{1,p}\left(  \Omega\right)  }\left\{  \frac
{1}{p}b_{h}\left(  w,w\right)  -\int g\left(  x\right)  w\left(  x\right)
dx\right\}  .
\end{align*}
On the other side, it is clear that (\ref{Prel5}) allows us to write
\begin{align*}
&  \lim_{\delta\rightarrow0}\min_{w\in u_{0}+X_{0}}\left\{  \frac{1}{p}%
B_{h}\left(  w,w\right)  -\int g_{\delta}\left(  x\right)  w\left(  x\right)
dx\right\} \\
&  \leq\lim_{\delta\rightarrow0}\left(  \frac{1}{p}B_{h}\left(  u,u\right)
-\int g_{\delta}\left(  x\right)  u\left(  x\right)  dx\right) \\
&  =\frac{1}{p}b_{h}\left(  u,u\right)  -\int g\left(  x\right)  u\left(
x\right)  dx\\
&  =\min_{w\in u_{0}+W_{0}^{1,p}\left(  \Omega\right)  }\left\{  \frac{1}%
{p}b_{h}\left(  w,w\right)  -\int g\left(  x\right)  w\left(  x\right)
dx\right\}  .
\end{align*}
From the above lines we infer that both $u$ and $u^{\ast}$ are solutions to
the local state problem (\ref{d}). Thereby, if we use uniqueness, which have
been checked at the end of Section \ref{S2}, we deduce that $u=u^{\ast}$ and
consequently, that $\left(  g,u\right)  \in\mathcal{A}^{loc}$. Furthermore,
another consequence we can derive is the following convergence of energies:
\[
\lim_{\delta\rightarrow0}B_{h}\left(  u_{\delta},u_{\delta}\right)
=b_{h}\left(  u,u\right)
\]

\item Take any $\left(  f,v\right)  \in\mathcal{A}^{loc}$ and consider the
sequence of solutions $\left(  f,v_{\delta}\right)  $ of the nonlocal boundary
problem $\left(  P^{\delta}\right)  $ with $g=f.$ Since $\left(  f,v_{\delta
}\right)  \in\mathcal{A}^{\delta}$ then
\begin{equation}
I\left(  f,v\right)  =\lim_{\delta}I_{\delta}\left(  f,v_{\delta}\right)
\geq\lim_{\delta}I_{\delta}\left(  g_{\delta},u_{\delta}\right)  \geq I\left(
g,u\right)  .\label{8}%
\end{equation}
To prove that we notice that the first equality of (\ref{8}) is true because
according to Theorem \ref{Th2} $v_{\delta}\rightarrow v\in u_{0}+W_{0}%
^{1,p}\left(  \Omega\right)  $ strongly in $L^{p}$ and $\left(  f,v\right)
\in\mathcal{A}^{loc}.$ If we use now the latter strong convergence and we pay
attention to the convergence (\ref{conver_G}), then it is straightforward to
deduce
\begin{align*}
\lim_{\delta}I_{\delta}\left(  f,v_{\delta}\right)   &  =\lim_{\delta}%
\int_{\Omega}\left(  G\left(  x,v_{\delta}\left(  x\right)  \right)
+\beta\left\vert f\left(  x\right)  \right\vert ^{p^{\prime}}\right)  dx\\
&  =\int_{\Omega}\left(  G\left(  x,v\left(  x\right)  \right)  +\beta
\left\vert f\left(  x\right)  \right\vert ^{p^{\prime}}\right)  dx\\
&  =I\left(  f,v\right)  .
\end{align*}
The first inequality of (\ref{8}) is due to the fact that $\left(  g_{\delta
},u_{\delta}\right)  $ is a sequence of minimizers for the cost $I_{\delta}$
and therefore,
\[
\lim_{\delta}I_{\delta}\left(  f,v_{\delta}\right)  \geq\lim_{\delta}%
I_{\delta}\left(  g_{\delta},u_{\delta}\right)  .
\]
And the second inequality of (\ref{8}) holds because (\ref{conver_G}) and
Fatou's Lemma yield%
\begin{align*}
\lim_{\delta}I_{\delta}\left(  g_{\delta},u_{\delta}\right)   &  \geq
\liminf_{\delta}\int_{\Omega}\left(  G\left(  x,u_{\delta}\left(  x\right)
\right)  +\beta\left\vert g_{\delta}\left(  x\right)  \right\vert ^{p^{\prime
}}\right)  dx\\
&  \geq\int_{\Omega}\left(  G\left(  x,u\left(  x\right)  \right)
+\beta\left\vert g\left(  x\right)  \right\vert ^{p^{\prime}}\right)  dx\\
&  =I\left(  g,u\right)  .
\end{align*}

\end{enumerate}
\end{proof}

\subsection{Case $p=2$}

The thesis of Theorem \ref{Th3} remains true when we put $\gamma>0$ and $p=2.$
To prove this statement we previously define the concrete optimization
problems we have to face.

The nonlocal control problem $\left(  \mathcal{P}^{\delta}\right)  $ read as
\begin{equation}
\min_{\left(  g,u\right)  \in\mathcal{A}^{\delta}}J_{\delta}\left(  g,u\right)
\label{NLCP_2}%
\end{equation}
where
\[
J_{\delta}\left(  g,u\right)  =I_{\delta}\left(  g,u\right)  +B_{h_{0}}\left(
u,u\right)  ,
\]
$B_{h_{0}}\left(  \cdot,\cdot\right)  $ is defined as in (\ref{bb}) with
$h=h_{0}$ and $p=2.$ The set of the admissibility is%
\[
\mathcal{A}^{\delta}=\left\{  \left(  f,v\right)  \in L^{2}\left(
\Omega\right)  \times X:v\text{ solves (\ref{b}) with }g=f\right\}  ,
\]
where (\ref{b}) has also to considered for the specific case $p=2.$It must be
underlined that for each $\delta$, there is a solution there is at least a
solution $\left(  g_{\delta},u_{\delta}\right)  \in L^{2}\left(
\Omega\right)  \times\left(  u_{0}+H_{0}^{1}\left(  \Omega\right)  \right)  $
to the problem (\ref{NLCP_2}). \smallskip

The corresponding local control problem $\left(  \mathcal{P}^{loc}\right)  $
is stated as
\begin{equation}
\min_{\left(  g,u\right)  \in\mathcal{A}^{loc}}J\left(  g,u\right) \label{e_2}%
\end{equation}
where%
\[
J\left(  g,u\right)  =I\left(  g,u\right)  +\gamma\int_{\Omega}h_{0}\left(
x\right)  \left\vert \nabla u\left(  x\right)  \right\vert ^{2}dx,
\]
with%
\[
\mathcal{A}^{loc}=\left\{  \left(  f,v\right)  \in L^{2}\left(  \Omega\right)
\times W^{1,2}\left(  \Omega\right)  :v\text{ solves (\ref{d}) with
}g=f\right\}
\]
and (\ref{d}) is assumed to be constrained to the case $p=2.$

\begin{theorem}
\label{Th4}Let $\left(  g_{\delta},u_{\delta}\right)  \ $be a sequence of
solutions to the problem (\ref{NLCP_2}). Then there exists a pair $\left(
g,u\right)  \in L^{2}\left(  \Omega\right)  \times\left(  u_{0}+W^{1,2}\left(
\Omega\right)  \right)  $ and a subsequence of indexes $\delta$ for which the
following conditions hold:

\begin{enumerate}
\item $g_{\delta}\rightharpoonup g$ weakly in $L^{2}\left(  \Omega\right)
\ $and $u_{\delta}\rightarrow u$ strongly in $L^{2}\left(  \Omega\right)  $ if
$\delta\rightarrow0.$

\item The identity (\ref{G-convergence-energy}) holds and $\left(  g,u\right)
\in\mathcal{A}^{loc}.$

\item $\left(  g,u\right)  $ is a solution to the local control problem
(\ref{e_2}).If in addition $G\left(  x,\cdot\right)  $ is assumed to be
convex, then the solution $\left(  g,u\right)  $ is unique.
\end{enumerate}
\end{theorem}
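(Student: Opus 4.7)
The plan is to adapt the strategy of Theorem \ref{Th3} and absorb the extra regularization term $\gamma B_{h_0}(u_\delta,u_\delta)$ by exploiting the linear-case improvements (\ref{linear-case}) and (\ref{linear-case0}). For parts (1) and (2) I would repeat verbatim the corresponding steps of Theorem \ref{Th3}. Starting from a minimizing sequence $(g_\delta,u_\delta)$ for $J_\delta$, the a priori estimates in Subsection \ref{S2_1} (points \ref{S2_1_2} and \ref{S2_1_3}) furnish, up to subsequence, $g_\delta\rightharpoonup g$ weakly in $L^{2}(\Omega)$ and $u_\delta\to u^{*}$ strongly in $L^{2}(\Omega)$ with $u^{*}\in u_{0}+W^{1,2}_{0}(\Omega)$. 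The two inequalities for the minimum of the Dirichlet-type functionals based on (\ref{Prel4}) and (\ref{Prel5}) then yield (\ref{G-convergence-energy}) and force $u^{*}=u$, where $u$ is the local state driven by $g$; in particular $(g,u)\in\mathcal{A}^{loc}$.

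For part (3), existence, I would compare $J(g,u)$ with $J(f,v)$ for an arbitrary competitor $(f,v)\in\mathcal{A}^{loc}$. Letting $v_\delta$ be the nonlocal state driven by $f$, so that $(f,v_\delta)\in\mathcal{A}^{\delta}$, the minimality of $(g_\delta,u_\delta)$ gives $J_\delta(g_\delta,u_\delta)\leq J_\delta(f,v_\delta)$, and the task reduces to establishing
\[
\liminf_{\delta\to 0}J_\delta(g_\delta,u_\delta)\geq J(g,u)\quad\text{and}\quad\lim_{\delta\to 0}J_\delta(f,v_\delta)=J(f,v).
\]
For the lower bound I would split the cost into its three pieces and use (a) Lipschitz continuity of $G(x,\cdot)$ combined with the strong $L^{2}$ convergence of $u_\delta$, (b) weak-$L^{2}$ lower semicontinuity for $\beta\int|g_\delta|^{2}\,dx$, and (c) inequality (\ref{Prel4}) with $h$ replaced by $h_{0}\in\mathcal{H}$ for the nonlocal gradient term. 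For the competitor limit, Theorem \ref{Th2} yields $v_\delta\to v$ strongly in $L^{2}$, while (\ref{linear-case0}) delivers $B_{h_{0}}(v_\delta,v_\delta)\to\int_\Omega h_{0}|\nabla v|^{2}\,dx$; the remaining terms converge trivially. Combining the two displays produces $J(g,u)\leq J(f,v)$ for every admissible $(f,v)$, proving optimality.

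For uniqueness under convexity of $G(x,\cdot)$ I would mimic Remark \ref{unique} at the level of $J$: for two minimizers $(g,u)$ and $(f,v)$, the linearity of the local state equation, valid because $p=2$, makes $(sg+(1-s)f,\,su+(1-s)v)$ admissible for $s\in(0,1)$, and strict convexity of $t\mapsto|t|^{2}$ together with convexity of $G(x,\cdot)$ and of the quadratic form $w\mapsto\int h_{0}|\nabla w|^{2}\,dx$ forces a strict drop in the cost as soon as $g\neq f$; the case $g=f$ is excluded by the uniqueness of the local state equation. The main obstacle I anticipate is precisely the passage to the limit in $B_{h_{0}}(v_\delta,v_\delta)$ along the competing nonlocal sequence: for general $p>1$ only the one-sided estimate (\ref{Prel4}) is at our disposal, which would give a liminf but not an equality. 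It is the strong convergence in the nonlocal energy space, available only for $p=2$ via (\ref{linear-case})--(\ref{linear-case0}), that upgrades the inequality to continuity and thereby justifies the hypothesis $p=2$ in the statement.
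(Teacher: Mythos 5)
Your proposal follows essentially the same route as the paper: parts (1) and (2) are carried over verbatim from Theorem \ref{Th3}, part (3) is proved by the same two-sided comparison using (\ref{Prel4}) for the lower bound along the optimal sequence and (\ref{Prel5}) together with (\ref{linear-case0}) for the exact limit along the competitor sequence $(f,v_\delta)$, and uniqueness is reduced to Remark \ref{unique}. Your closing observation that the equality $\lim_\delta B_{h_0}(v_\delta,v_\delta)=\int_\Omega h_0|\nabla v|^2\,dx$ is precisely where the hypothesis $p=2$ enters is exactly the point the paper's proof relies on.
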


\begin{proof}
The procedure carried out for the proof of Theorem \ref{Th3} moves perfectly
into this context. It only remains to verify part 3. More concretely, we only
need to show%
\begin{equation}
J\left(  f,v\right)  \geq J\left(  g,u\right)  \text{ for any }\left(
f,v\right)  \in\mathcal{A}^{loc}.\label{ineq-fin}%
\end{equation}
To check (\ref{ineq-fin}) we take the sequence of solutions $\left(
f,v_{\delta}\right)  $ of the nonlocal boundary problem $\left(  P^{\delta
}\right)  $ with $g=f.$ By using the inequality%
\[
\lim_{\delta}B_{h_{0}}\left(  v,v\right)  =\int_{\Omega}h_{0}\left(  x\right)
\left\vert \nabla v\left(  x\right)  \right\vert ^{2}dx
\]
(see (\ref{Prel5})) and taking into account the limit (\ref{linear-case0}) we
realize that
\begin{align*}
J\left(  f,v\right)    & =\int_{\Omega}\left(  G\left(  x,v\left(  x\right)
\right)  +\beta\left\vert f\left(  x\right)  \right\vert ^{2}\right)
dx+\int_{\Omega}h_{0}\left(  x\right)  \left\vert \nabla v\left(  x\right)
\right\vert ^{2}dx\\
& =\int_{\Omega}\left(  G\left(  x,v\left(  x\right)  \right)  +\beta
\left\vert f\left(  x\right)  \right\vert ^{2}\right)  dx+\lim_{\delta
}B_{h_{0}}\left(  v,v\right)  \\
& =\lim_{\delta}\int_{\Omega}\left(  G\left(  x,v_{\delta}\left(  x\right)
\right)  +\beta\left\vert f\left(  x\right)  \right\vert ^{2}\right)
dx+\lim_{\delta}B_{h_{0}}\left(  v_{\delta},v_{\delta}\right)  \\
& =\lim_{\delta}J_{\delta}\left(  f,v_{\delta}\right)  .
\end{align*}
By applying now the optimality of $\left(  g_{\delta},u_{\delta}\right)  $ for
$J_{\delta},$ we clearly infer%
\[
J\left(  f,v\right)  =\lim_{\delta}J_{\delta}\left(  f,v_{\delta}\right)
\geq\lim_{\delta}J_{\delta}\left(  g_{\delta},u_{\delta}\right)  .
\]
And finally, by recalling the inequality
\[
\lim_{\delta}B_{h_{0}}\left(  u_{\delta},u_{\delta}\right)  \geq b_{h_{0}%
}\left(  u,u\right)
\]
(see \ref{Prel4}), we get
\begin{align*}
\lim_{\delta}J_{\delta}\left(  g_{\delta},u_{\delta}\right)    & =\lim
_{\delta}\int_{\Omega}\left(  G\left(  x,u_{\delta}\left(  x\right)  \right)
+\beta\left\vert g_{\delta}\left(  x\right)  \right\vert ^{2}\right)
dx+\lim_{\delta}B_{h_{0}}\left(  u_{\delta},u_{\delta}\right)  \\
& \geq\int_{\Omega}\left(  G\left(  x,u\left(  x\right)  \right)
+\beta\left\vert g\left(  x\right)  \right\vert ^{2}\right)  dx+b_{h_{0}%
}\left(  u,u\right)  \\
& =J\left(  g,u\right)  .
\end{align*}
By linking the above chain of inequalities we have proved (\ref{ineq-fin}).
Regarding uniqueness, it is sufficient to repeat the same argument of Remark
\ref{unique}.
\end{proof}

\begin{acknowledgement}
This work was supported by the Spanish Project MTM2017-87912-P, Ministerio de
Econom\'{\i}a, Industria y Competitividad (Spain), and by the Regional Project
SBPLY/17/180501/000452, JJ. CC. de Castilla-La Mancha. There are no conflicts
of interest to this work.
\end{acknowledgement}


\begin{thebibliography}{99}                                                                                               %
\bibitem {Ak}B. Aksoylu and T. Mengesha (2010).\emph{\ Results on nonlocal
boundary value problems. }Numer. Funct. Anal. Optim. 31, pp. 1301--1317.

\bibitem {Andres}F. Andr\'{e}s (2016).\emph{\ Aproximaci\'{o}n y
optimizaci\'{o}n de problemas no locales. }Memoria Tesis Doctoral. Universidad
de Castilla-La Mnacha, Toledo.

\bibitem {Andres-Julio2}F. Andr\'{e}s and J. Mu\~{n}oz (2015).\emph{\ Nonlocal
optimal design: A new perspective about the approximation of\ solutions in
optimal design. }J. Math. Anal. Appl. 429:288-- 310.

\bibitem {Andres-Julio3}F. Andr\'{e}s and J. Mu\~{n}oz (2017).\emph{\ On the
convergence of a class of nonlocal elliptic equations and related optimal
design problems. }J. Optim. Theory Appl. 172:33--55. DOI 10.1007/s10957-016-1021-z.

\bibitem {Warma}H. Antil, M. Warma (2019). \emph{Optimal control of the
coefficient for the regional fractional p-Laplace equation: Approximation and
convergence.} Mathematical Control \& Related Fields, 9(1): 1-38.

\bibitem {Rossi2}F. Andreu-Vaillo, J. M. Maz\'{o}n, J. D. Rossi, and J. J.
Toledo-Melero (2010).\emph{\ Nonlocal Diffusion Problems. }American
Mathematical Society. Mathematical Surveys and Monographs. Vol. 165.

\bibitem {Rossi3}F. Andreu, J. D. Rossi and J. J. Toledo-Melero (2011)\emph{.
Local and nonlocal weighted p-Laplacian evolution equations with Neumann
boundary conditions. }Publ. Mat. 55:27--66.

\bibitem {Bakunin}O. Bakunin (2008).\emph{\ Turbulence and diffusion: scaling
versus equations. }Springer Verlag.

\bibitem {bellidob}J. C. Bellido, C Mora-Corral and P. Pedregal (2015).
\emph{Hyperelastticity as a }$\Gamma-$\emph{limit of Peridynamics when the
horizon goes to zero.} Cal. Var., doi:10.1007/s00526-015-0839--9.

\bibitem {Bellido-Egrafov}J. C. Bellido and A. Egrafov (2019).\emph{\ A simple
characterization of }$H$\emph{-Convergence for a class of nonlocal problems.
}Revista Matem\'{a}tica Complutense. DOI: 10.1007/s13163-020-00349-9.

\bibitem {Bonder}J. F. Bonder, A. Ritorto and A. Mart\'{\i}n (2017).\emph{\ }%
$H$\emph{-Convergence Result for Nonlocal Elliptic-Type Problems via Tartar's
Method. }SIAM Journal on Mathematical Analysis. 49(4):2387--2408.

\bibitem {FernandezBonder}J. F. Bonder and J. F. Spedaletti (2018)\emph{. Some
nonlocal optimal design problems. }Journal of Mathematical Analysis and
Applications, Volume 459, Issue 2, 15:906-931.

\bibitem {Bourgain-Brezis}J. Bourgain, H. Brezis and P. Mironescu
(2001)\emph{. Another look at Sobolev spaces. }J.L. Menaldi (Ed.) et al.,
Optimal Control and Partial Differential Equations, IOS Press. pp. 439--455 (A
volume in honour of A. Benssoussan's 60th birthday).

\bibitem {Bucur}C. Bucur and E. Valdinoci (2016).\emph{\ Nonlocal diffusion
and applications. Lecture Notes of the Unione Matematica Italiana, 20.
}Springer. Unione Matematica Italiana, Bologna.

\bibitem {caputo}M. Caputo\emph{\ }(2018).\emph{\ Linear models of dissipation
whose Q is almost frequency independent-II. }Geophys. J. R. Astr. Soc. 13, No
5 (1967), 529--539; Reprinted in: Fract. Calc. Appl. Anal. 11, No 1 (2008), 4--14.

\bibitem {Carreras}B. Carreras, V. Lynch and G. Zaslavsky
(2001).\emph{\ Anomalous diffusion and exit time distribution of particle
tracer in plasma turbulence models. }Phys. Plasmas 8:113:147.

\bibitem {Cea-Mala}J. Cea and K. Malanowski (1970)\emph{. An example of a
Max-Min problem in Partial Differential Equations. }SIAM J. Control, Vol. 8,
No. 3, 305-316.

\bibitem {Chipot}M. Chipot.\emph{\ Elliptic Equations: An Introductory Course.
}Birkh\"{a}user, 2009.

\bibitem {Debnath}L. Debnath.\emph{\ Recent applications of fractional
calculus to science and engineering, }Int. J. Math. Math. Sci. 54 (2003), 3413--3442.

\bibitem {D'Elia-Gunz1}M. D'Elia and M. Gunzburger (2014).\emph{\ Optimal
distributed control of nonlocal steady diffusion problems. }SIAM. J. Control
Optim. Vol. 52. No.1:243--273.

\bibitem {D'Elia-Gunz2}M. D'Elia and M. Gunzburger
(2016).\emph{\ Identification of the diffusion parameter in nonlocal steady
diffusion problems.} Applied Mathematics and Optimization. 73. 2:227--249.

\bibitem {D'Elia-Gunz2b}M. D'Elia, Q. Du and M. Gunzburger
(2018).\emph{\ Recent Progress in Mathematical and Computational Aspects of
Peridynamics. }In: Voyiadjis G. (eds) Handbook of Nonlocal Continuum Mechanics
for Materials and Structures. Springer, Cham.

\bibitem {DeliaNumer}M. D'Elia, Q. Du, C. Glusa, M. Gunzburger, X. Tian, Z.
Zhou (2020). \emph{Numerical methods for nonlocal and fractional models.}
arXiv:2002.01401 [math.NA]. To appear in Acta Numerica (2021), doi:10.1017/S09624929.

\bibitem {DiNezza-Palatucci-Valdinoci}E. Di Nezza, G. Palatucci and E.
Valdinoci (2012).\emph{\ Hitchhiker's guide to the fractional Sobolev spaces.
}Bull. Sci. math., Vol. 136, No. 5, 521--573.

\bibitem {Du}Q. Du.\emph{\ Nonlocal Modeling, Analysis and Computation}.
Volume 94 of CBMS-NSF regional conference series in applied mathematics. SIAM,
Philadelphia (2019).

\bibitem {Gunzburger-Du-L-1}Q. Du, M. D. Gunzburger, R. B. Lehoucq and K. Zhou
(2012).\emph{\ Analysis and approximation of nonlocal Diffusion problems with
volume constraints. }SIAM Rev.54 (4), 667-696.

\bibitem {Gardiner}C. W. Gardiner (2004).\emph{\ Handbook of Stochastic
Methods: for Physics, Chemistry and the Natural Sciences. }Springer Series in
Synergetics. Springer-Verlag.

\bibitem {hilfer}R. Hilfer.\emph{\ Applications of Fractional Calculus in
Physics. }World Scientific, Singapore, 2000.

\bibitem {Hinds-Radu}B. Hinds and P. Radu (2012).\emph{\ Dirichlet's principle
and wellposedness of solutions for a nonlocal p-Laplacian system. }Appl. Math.
Comput. 219, 1411--1419.

\bibitem {Jikov}V. V. Jikov, S. M. Kozlov and O. A. Oleinik (1994).
\emph{Homogenization of Differential Operators and Integral Functionals},
Springer-Verlag, Heidelberg.

\bibitem {kava}N. Kavallaris, T. Suzuki. \emph{Non-Local Partial Differential
Equations for Engineering and Biology. Mathematical Modeling and Analysis}.
Cham: Springer, (2018)

\bibitem {kibas}A.A. Kilbas, H.M. Srivastava, and J. J. Trujillo.\emph{Theory
and applications of fractional differential equations. }Elsevier Science B.V.,
Amsterdam, 2006.

\bibitem {kulish}V. V. Kulish and J. L. Lage.\emph{\ Application of Fractional
Calculus to Fluid Mechanics. }J. Fluids Eng.124(3) (2002), 803--806.

\bibitem {Mazon}J. M. Maz\'{o}n, J. D. Rossi, and J. J. Toledo-Melero
(2016).\emph{\ Fractional p-Laplacian evolution equations. }J. Math. Pures Appl.105:810--844.

\bibitem {Mengesha2}T. Mengesha and Q. Du (2015).\emph{\ On the variational
limit of a class of nonlocal functionals related to peridynamics.}
Nonlinearity 28, 11:3999- 4035.

\bibitem {Mengesha}T. Mengesha and Q. Du (2016). \emph{Characterization of
function spaces of vector fields and an application in nonlinear
peridynamics}. Nonlinear Analysis 140:82:111.

\bibitem {Metzler}R. Metzler and J. Klafter (2000).\emph{\ The random walk's
guide to anomalous diffusion: a fractional dynamics approach. }Phys. Rep. 339:1--77.

\bibitem {Metzler2}R. Metzler and J. Klafter (2004).\emph{\ The restaurant at
the end of the random walk: recent developments in the description of
anomalous transport by fractional dynamics.} J. Phys. A: Math. Gen. 37, R161.

\bibitem {Miller}K.M. Miler, B. Ross. \emph{An Introduction to the Fractional
Calculus and Fractional Differential Equations. }John Wiley \& Sons Inc., New
York (1993).

\bibitem {Munoz}J. Mu\~{n}oz (2019).\emph{\ Generalized Ponce's inequality.
}arXiv:1909.04146v2 [math.AP].

\bibitem {Neuman}S. P. Neuman and D. M. Tartakosky (2009).\emph{\ Perspective
on theories of non-Fickian transport in heterogeneous media.} Adv. in Water
Resources. 32:670--560.

\bibitem {Oustaloup}A. Oustaloup, V. Pommier and P. Lanusse\emph{. Design of a
fractional control using performance contours. Application to
an\ electromechanical system. }Fract. Calc. Appl. Anal. 6 (2003), no. 1, 1--24.

\bibitem {Ponce1}A. C. Ponce (2004).\emph{\ An estimate in the spirit of
Poincar\'{e}'s inequality. }J. Eur. Math. Soc. (JEMS). 6:1--15.

\bibitem {Ponce2}A. C. Ponce (2004).\emph{\ A new approach to Sobolev Spaces
and connections to }$\Gamma$\emph{-convergence. }Calc. Var. 19:229--255.

\bibitem {Riesz}F. Riesz, B. Sz.-Nagy (1990).\emph{\ Functional Analysis.}
Dover, New York.

\bibitem {rubin}B. Rubin\emph{. Fractional Integrals and Potentials. }Longman, Harlow(1996).

\bibitem {Sh}M. Shlesinger, B. West and J. Klafter (1987).\emph{\ L\'{e}vy
dynamics of enhanced diffusion: application to turbulence. }Phys Rev. Lett. 58:1100-1103.

\bibitem {Vazquez2}J. L. V\'{a}zquez (2012).\emph{\ Nonlinear Diffusion with
Fractional La Laplacian Operators. }In \textquotedblleft Nonlinear partial
differential equations: the Abel Symposium 2010\textquotedblright, Holden,
Helge and Karlse, Kenneth H. eds., Springer, pp. 271-298.

\bibitem {Vazquez}J. L. V\'{a}zquez (2017).\emph{\ The Mathematical Theories
of Diffusion: Nonlinear and Fractional Diffusion. In: M. Bonforte, G. Grillo
(eds). Nonlocal and Nonlinear Diffusions and Interactions: New Methods and
Directions. }Lecture Notes in Mathematics. vol 2186:205--278. Springer. Cham.

\bibitem {waurick}M Waurick (2018). \emph{Nonlocal }$H$\emph{-convergence}.
Calc. Var. Partial Diefferential \ Equations, 57:159.

\bibitem {Zhou}K. Zhou, and Q. Du (2010). \emph{Mathematical and numerical
analysis of linear peridynamic models with nonlocal boundary conditions}. SIAM
J. Numer. Anal. Vol 48. No 5:1759--1780.
\end{thebibliography}
\end{document}